\newcommand{\id}{\mbox{Id}}
\newcommand{\1}{{\bf 1}}
\newcommand{\bu}{\mathbf{U}}
\newcommand{\C}{\mathbb C}
\newcommand{\R}{\mathbb R}
\newcommand{\ca}{\mathcal A}
\newcommand{\cac}{\mathcal C}
\newcommand{\cl}{\mathcal L}
\newcommand{\cn}{\mathcal N}
\newcommand{\al}{\alpha}
\newcommand{\der}{\delta}
\newcommand{\ga}{\gamma}
\newcommand{\ka}{\kappa}
\newcommand{\la}{\lambda}
\newcommand{\si}{\sigma}
\newcommand{\vp}{\varphi}
\newcommand{\lln}{\left|}
\newcommand{\rrn}{\right|}
\newtheorem{theorem}{Theorem}[section]
\newtheorem{definition}[theorem]{Definition}
\newtheorem{lemma}[theorem]{Lemma}
\newtheorem{proposition}[theorem]{Proposition}
\theoremstyle{remark}
\newtheorem{remark}[theorem]{Remark}
\date{\today}
\begin{document}

\makeatletter
\def\@settitle{\begin{center}%
  \baselineskip14\p@\relax
    \normalfont\LARGE
\@title
  \end{center}%
}
\makeatother

\title{Integration with respect to the Hermitian fractional Brownian motion}

\author{Aur\'elien Deya}
\address[A. Deya]{Institut Elie Cartan, University of Lorraine
B.P. 239, 54506 Vandoeuvre-l\`es-Nancy, Cedex
France}
\email{aurelien.deya@univ-lorraine.fr}

\keywords{Hermitian fractional Brownian motion; integration theory; pathwise approach; non-commutative stochastic calculus; non-commutative fractional Brownian motion}

\subjclass[2010]{15B52,60G22,60H05,46L53}

\begin{abstract}
For every $d\geq 1$, we consider the $d$-dimensional Hermitian fractional Brownian motion (HfBm), that is the process with values in the space of $(d\times d)$-Hermitian matrices and with upper-diagonal entries given by complex fractional Brownian motions of Hurst index $H\in (0,1)$. 

\smallskip

We follow the approach of [A. Deya and R. Schott: \textit{On the rough paths approach to non-commutative stochastic calculus}, JFA (2013)] to define a natural integral with respect to the HfBm when $H>\frac13$, and identify this interpretation with the rough integral with respect to the $d^2$ entries of the matrix. Using this correspondence, we establish a convenient Itô--Stratonovich formula for the Hermitian Brownian motion. 

\smallskip

Finally, we show that at least when $H\geq \frac12$, and as the size $d$ of the matrix tends to infinity, the integral with respect to the HfBm converges (in the tracial sense) to the integral with respect to the so-called non-commutative fractional Brownian motion. 

\end{abstract} 

\maketitle

\section{Introduction}

We propose to investigate some integration issues related to the so-called \textit{Hermitian fractional Brownian motion}, that is the fractional extension of Dyson's celebrated Hermitian Brownian motion \cite{dyson}. The specific definition of the Hermitian fractional Brownian motion (HfBm in the sequel) naturally goes as follows. For some fixed parameter $H\in (0,1)$, consider first two independent families  $(x(i,j))_{i\geq j\geq 1}$ and $(\tilde{x}(i,j))_{i\geq j\geq 1}$ of independent fractional Brownian motions with common Hurst index $H$, defined on  a classical probability space $(\Omega,\mathcal{F},\mathbb{P})$. Then, for every fixed (finite) dimension $d\geq 1$, we define the ($d$-dimensional) HfBm of Hurst index $H$ as the process $X^{(d)}$ with values in the space of the $(d\times d)$-Hermitian matrices and with upper-diagonal entries given for every $t\geq 0$ by
\begin{equation}\label{defi-hfbm}
\begin{split}
&X^{(d)}_t(i,j):=\frac{1}{\sqrt{2d}} \big(x_t(i,j)+\imath\, \tilde{x}_t(i,j)\big) \quad \text{for} \ 1\leq j<i\leq d \ ,\\
&   X^{(d)}_t(i,i):=\frac{x_t(i,i)}{\sqrt{d}} \quad \text{for} \ 1\leq i\leq d \ .
\end{split}
\end{equation}
Observe that the classical Hermitian Brownian motion is then nothing but the HfBm of Hurst index $H=\frac12$. 

\

The HfBm (or more precisely its direct counterpart in the space of symmetric matrices) was already at the core of the analysis of \cite{nualart-perez-abreu,matrix-approx}, through the consideration of the associated Dyson process (that is, the process derived from the eigenvalues of $X^{(d)}_t$) and the stochastic dynamics governing it. We will here follow a slightly different direction and rather focus on integration with respect to $X^{(d)}$ itself, seen as a process with values in a non-commutative algebra. In fact, our objectives can essentially be summarized along two (related) lines of research:

\

\noindent
$(i)$ First, and in the continuation of \cite{deya-schott,deya-schott-2,deya-schott-3}, we propose to develop a pathwise approach to integration with respect to $X^{(d)}$, that is a pathwise way to interpret the integral $\int_0^1 A_u \mathrm{d} X^{(d)}_u B_u$, for $A,B$ in a suitable class of matrix-valued processes, and where $A_u\mathrm{d} X^{(d)}_u B_u$ is simply understood as a product of $(d\times d)$-matrices. Note that as soon as $H\neq \frac12$, the entries of $X^{(d)}$ no longer satisfy the martingale property, so that the integral $\int_0^1 A_u \mathrm{d}X^{(d)}_u B_u$ cannot be (componentwise) interpreted in the classical Itô sense anymore. We will overcome this difficulty by following the developments of \cite{deya-schott} on rough pathwise integration in a general algebra, which will at least cover the situation where $H>\frac13$. Our aim here is also to point out the fact that the resulting construction coincides with the rough-path interpretation of the integral with respect to the $d^2$-dimensional process $\{X^{(d)}(i,j)\}_{1\leq i,j\leq d}$, which allows us to make a link between the considerations of \cite{deya-schott} and the more classical rough-paths approach to finite-dimensional integration (as displayed in \cite{gubi} or more recently in \cite{friz-hairer}). As an illustration of the possibilities offered by the pathwise approach, we will finally exhibit a clear Itô--Stratonovich conversion formula for the Hermitian Brownian motion (see Proposition \ref{prop:ito-strato} below). 

\

\noindent
$(ii)$ Then, in the framework of non-commutative probability theory and at least when $H\geq \frac12$, we intend to emphasize the relevance of the HfBm as a matrix model (or a matrix approximation) for the so-called \textit{non-commutative fractional Brownian motion} (NC-fBm in the sequel). Let us recall that the NC-fBm has first been introduced in \cite{nourdin-taqqu} as a natural fractional extension of the celebrated free Brownian motion, and then further studied in \cite{deya-schott-3,nourdin-fbm,matrix-approx}. In Section \ref{sec:asympt-results} below, we will exhibit a convergence result (as the dimension $d$ goes to infinity) at the level of the processes themselves, but also at the level of the stochastic integrals these processes generate, which will both illustrate the robustness of the approximation and the consistency of the stochastic integrals. The convergence will therein be interpreted in the tracial sense, and the result thus gives a thorough account on the asymptotic behaviour of the mean spectral distribution of the processes under consideration (either $X^{(d)}$ or the integrals it generates).

\

The study is naturally organized along the above two-part splitting: in Section \ref{sec:integr-hfbm}, we focus on integration with respect to the HfBm for some fixed \emph{finite} dimension $d\geq 1$, while in Section \ref{sec:asympt-results}, we examine the limit of these objects (from a spectral perspective) as $d$ goes to infinity, and make the link with the non-commutative fractional Brownian motion.

\

Throughout the paper, we will denote the increments of any vector-valued path $(g_t)_{t\geq 0}$ by $\der g_{st}:=g_t-g_s$, for all $s,t\geq 0$.

\section{Integration with respect to HfBm}\label{sec:integr-hfbm}

Our first objective is to provide a clear interpretation of the integral against the HfBm $X^{(d)}$, for some fixed dimension $d\geq 1$. To be more specific, we are interested in the interpretation of the product model $\int A_u \mathrm{d}X^{(d)}_u B_u$, for processes $A,B$ taking values in a class of $(d\times d)$-matrices to be determined.

\smallskip

To this end, we propose to adapt the developments of \cite[Section 4]{deya-schott} (about rough integration in a general algebra) to the setting under consideration, that is to the algebra $\ca^{(d)}:=\C^{d,d}$ and the driving process $X^{(d)}$, along an almost sure formulation (due to the deterministic framework of \cite[Section 4]{deya-schott}). 

\smallskip

For this adaptation to be possible, \textit{we need to assume, throughout the section, that $X^{(d)}$ is a HfBm of Hurst index $H>\frac13$. Also, we fix $\frac13 < \ga < H$, and recall that in this case, $X^{(d)}$ is a $\ga$-Hölder process (a.s.).}

\subsection{Rough-path approach to integration with respect to $X^{(d)}$}\label{sec:general-rp}

\

\smallskip

We denote by $(E_{ij})_{1\leq i,j\leq d}$ the canonical basis of $\ca^{(d)}$, and consider the norm $\|U\|^2:=\sum_{i,j=1}^d |U(i,j)|^2$ for every $U\in \ca^{(d)}$. In the same way, we consider, for all $\bu \in (\ca^{(d)})^{\otimes 2}$ and $\mathcal{U}\in (\ca^{(d)})^{\otimes 3}$, the standard norms
$$\|\bu\|^2:=\sum_{i,j,k,\ell=1}^d |\bu((i,j),(k,\ell))|^2 \quad \text{and} \quad \|\mathcal{U}\|^2:=\sum_{i,j,k,\ell,m,n=1}^d |\mathcal{U}((i,j),(k,\ell),(m,n))|^2\ ,$$
where $\bu((i,j),(k,\ell))$ and $\mathcal{U}((i,j),(k,\ell),(m,n))$ refer of course to the coordinates of $\bu$ and $\mathcal{U}$ in the canonical bases $(E_{ij}\otimes E_{k\ell})$ and $(E_{ij}\otimes E_{k\ell}\otimes E_{mn})$.

\smallskip

The product interactions between $\ca^{(d)}$, $(\ca^{(d)})^{\otimes 2}$ and $ (\ca^{(d)})^{\otimes 3}$ will all be denoted by $\sharp$. To be more specific, we define the operation $\sharp$ as the linear extension of 
$$(U_1 \otimes U_2) \sharp Y=Y\sharp (U_1 \otimes U_2):=U_1Y  U_2\ , \quad  U_1,U_2,Y \in \ca^{(d)} \ ,$$
or as the linear extension of
$$Y \sharp (U_1 \otimes U_2 \otimes U_3) :=(U_1Y U_2) \otimes U_3 \quad , \quad (U_1 \otimes U_2 \otimes U_3) \sharp Y:=U_1 \otimes (U_2YU_3) \ .$$

\subsubsection{Product Lévy area}\label{subsec:tensor}

The following central object appears in \cite[Section 4]{deya-schott} as a natural \enquote{product} version of the classical Lévy area at the core of rough paths theory:

\begin{definition}\label{defi:aire-gene}
We call \emph{product Lévy area} above $X^{(d)}$ any process $\{\mathbf{X}_{st}\}_{0\leq s\leq t\leq 1}$ with values in $\cl(\ca^{(d)}\otimes \ca^{(d)},\ca^{(d)})$ such that, almost surely:

\smallskip

\noindent
(i) ($2\ga$-roughness) There exists a constant $c>0$ such that for all $0\leq s\leq t\leq 1$ and $\mathbf{U}\in \ca^{(d)}\otimes\ca^{(d)}$,
\begin{equation}\label{regu-levy-area}
\| \mathbf{X}_{st}[ \mathbf{U}] \| \leq c \, |t-s|^{2\ga} \|\mathbf{U}\| \ .
\end{equation}

\smallskip

\noindent
(ii) (Product Chen identity) For all $0\leq s\leq u\leq t\leq 1$ and $\mathbf{U}\in \ca^{(d)}\otimes\ca^{(d)}$, 
\begin{equation}\label{chen}
\mathbf{X}_{st}[\mathbf{U}]-\mathbf{X}_{su}[\mathbf{U}]-\mathbf{X}_{ut}[\mathbf{U}]=(\mathbf{U} \sharp \der X^{(d)}_{su}) \, \der X^{(d)}_{ut}\ .
\end{equation}
\end{definition}

\

In the finite-dimensional setting that we consider here, there is in fact a one-to-one correspondence between the set of product Lévy areas above $X^{(d)}$ and the set of (classical) Lévy areas above the $d^2$-dimensional process $(X^{(d)}(i,j))_{1\leq i,j\leq d}$. Let us recall here that, along the standard terminology of rough paths theory, a (classical) Lévy area above $(X^{(d)}(i,j))_{1\leq i,j\leq d}$ is a two-parameter path $(\mathbf{X}^{\mathbf{2}}_{st})_{s,t\in [0,1]}$ with values in  $(\ca^{(d)})^{\otimes 2}$ such that for all $0\leq s\leq u\leq t\leq 1$ and $1\leq i,j,k,\ell\leq d$, $\| \mathbf{X}^{\mathbf{2}}_{st}\| \leq c\, |t-s|^{2\ga}$ and 
$$\mathbf{X}^{\mathbf{2}}_{st}((i,j),(k,\ell))-\mathbf{X}^{\mathbf{2}}_{su}((i,j),(k,\ell))-\mathbf{X}^{\mathbf{2}}_{ut}((i,j),(k,\ell))=\der X^{(d)}_{su}(i,j) \der X^{(d)}_{ut}(k,\ell) \ .$$

The following (readily-checked) property thus makes a first link between the algebra approach of \cite[Section 4]{deya-schott} and the standard finite-dimensional rough-path formalism:

\begin{lemma}
There is a one-to-one relation $\mathbf{X}^{\mathbf{2}} \mapsto \mathbf{X}$ between the set of (classical) Lévy areas above the $d^2$-dimensional process $(X^{(d)}(i,j))_{1\leq i,j\leq d}$ and the set of product Lévy areas above $X^{(d)}$, given by the formula: for all $0\leq s\leq t\leq 1$, $1\leq i,j\leq d$ and $U,V\in \ca^{(d)}$,
\begin{equation}\label{correspondence-levy-areas}
\mathbf{X}_{st}\big[U\otimes V\big](i,j):=\sum_{k,\ell_1,\ell_2=1}^d U(i,k)V(\ell_1,\ell_2) \mathbf{X}^{\mathbf{2}}_{st}((k,\ell_1),(\ell_2,j)) \ .
\end{equation}
\end{lemma}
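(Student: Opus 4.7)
The lemma asserts a bijection between two sets, so the proof naturally splits into two parts: verifying that \eqref{correspondence-levy-areas} maps a classical L\'evy area to a product L\'evy area, and exhibiting an inverse map.

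For the forward direction, let $\mathbf{X}^{\mathbf{2}}$ be a classical L\'evy area above $(X^{(d)}(i,j))_{1\le i,j\le d}$. Formula \eqref{correspondence-levy-areas} on simple tensors $U\otimes V$, extended by linearity, defines a map $\mathbf{X}_{st}\in\cl(\ca^{(d)}\otimes\ca^{(d)},\ca^{(d)})$. To check \eqref{regu-levy-area}, I would apply Cauchy--Schwarz to the inner sum in \eqref{correspondence-levy-areas} to obtain
$$|\mathbf{X}_{st}[U\otimes V](i,j)|^2\le \|U\|^2\|V\|^2\sum_{k,\ell_1,\ell_2}|\mathbf{X}^{\mathbf{2}}_{st}((k,\ell_1),(\ell_2,j))|^2\,,$$
then sum over $(i,j)$ to get $\|\mathbf{X}_{st}[U\otimes V]\|\le c\,|t-s|^{2\ga}\|U\otimes V\|$, and extend the bound to general $\mathbf{U}\in\ca^{(d)}\otimes\ca^{(d)}$ by linearity (up to a $d$-dependent constant). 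To verify the product Chen identity \eqref{chen}, I would apply the classical Chen identity for $\mathbf{X}^{\mathbf{2}}$ inside \eqref{correspondence-levy-areas} to write
$$(\mathbf{X}_{st}-\mathbf{X}_{su}-\mathbf{X}_{ut})[U\otimes V](i,j)=\sum_{k,\ell_1,\ell_2}U(i,k)V(\ell_1,\ell_2)\,\der X^{(d)}_{su}(k,\ell_1)\,\der X^{(d)}_{ut}(\ell_2,j)\,,$$
and recognize this as the $(i,j)$-entry of $U\,\der X^{(d)}_{su}\,V\,\der X^{(d)}_{ut}=((U\otimes V)\sharp\der X^{(d)}_{su})\,\der X^{(d)}_{ut}$.

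For the inverse direction, given a product L\'evy area $\mathbf{X}$, I would recover $\mathbf{X}^{\mathbf{2}}$ by evaluating on canonical basis elements: setting
$$\mathbf{X}^{\mathbf{2}}_{st}((k,\ell_1),(\ell_2,j)):=\mathbf{X}_{st}[E_{1k}\otimes E_{\ell_1\ell_2}](1,j)\,,$$
I would check that the resulting $\mathbf{X}^{\mathbf{2}}$ is a classical L\'evy area by restricting the product Chen identity and \eqref{regu-levy-area} to these test elements and reading off the $(1,j)$-component of $((E_{1k}\otimes E_{\ell_1\ell_2})\sharp\der X^{(d)}_{su})\,\der X^{(d)}_{ut}$. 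The two compositions $\mathbf{X}^{\mathbf{2}}\mapsto\mathbf{X}\mapsto\mathbf{X}^{\mathbf{2}}$ and $\mathbf{X}\mapsto\mathbf{X}^{\mathbf{2}}\mapsto\mathbf{X}$ are inverse to each other by a linearity check on simple tensors. The bulk of the argument is then just index bookkeeping; the main step to nail down is the alignment of the summation pattern of $U\,\der X^{(d)}_{su}\,V\,\der X^{(d)}_{ut}$ with the tensor contraction in \eqref{correspondence-levy-areas}, consistent with the lemma being labeled as \enquote{readily-checked}.
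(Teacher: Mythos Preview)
The paper offers no proof beyond the phrase \enquote{readily-checked}, so there is no detailed argument to compare against. Your forward direction is sound: the Cauchy--Schwarz estimate for \eqref{regu-levy-area} and the identification of the Chen defect with the $(i,j)$-entry of $(U\,\der X^{(d)}_{su}\,V)\,\der X^{(d)}_{ut}$ are exactly what is needed, and the extra $d$-dependent constants are harmless since $d$ is fixed.

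The gap is in surjectivity. Your candidate inverse $\mathbf{X}^{\mathbf{2}}_{st}((k,\ell_1),(\ell_2,j)):=\mathbf{X}_{st}[E_{1k}\otimes E_{\ell_1\ell_2}](1,j)$ does produce a classical L\'evy area, and the composition $\mathbf{X}^{\mathbf{2}}\mapsto\mathbf{X}\mapsto\mathbf{X}^{\mathbf{2}}$ is indeed the identity. But the other composition $\mathbf{X}\mapsto\mathbf{X}^{\mathbf{2}}\mapsto\mathbf{X}$ is \emph{not} just a linearity check: feeding $E_{i'k}\otimes E_{\ell_1\ell_2}$ into the reconstructed map returns, at entry $(i,j)$, the value $\delta_{i,i'}\,\mathbf{X}_{st}[E_{1k}\otimes E_{\ell_1\ell_2}](1,j)$, whereas nothing in Definition~\ref{defi:aire-gene} forces a general product L\'evy area to satisfy $\mathbf{X}_{st}[E_{i'k}\otimes E_{\ell_1\ell_2}](i,j)=\delta_{i,i'}\,\mathbf{X}_{st}[E_{1k}\otimes E_{\ell_1\ell_2}](1,j)$. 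Concretely, when $2\ga\le 1$ one may perturb any product L\'evy area by $(t-s)L$ for an arbitrary $L\in\cl((\ca^{(d)})^{\otimes 2},\ca^{(d)})$ without violating \eqref{regu-levy-area} or \eqref{chen}, and a generic such $L$ lies outside the range of \eqref{correspondence-levy-areas}; so in that regime the map cannot be onto and your argument cannot close. When $2\ga>1$ the gap can be filled: the Chen identity makes the discrepancy above additive in $(s,t)$, and an additive $2\ga$-H\"older two-parameter function with $2\ga>1$ vanishes identically. Since the paper only ever uses the forward direction (to build $\mathbf{X}^{(d)}$ from $\mathbf{X}^{\mathbf{2},(d)}$ and in Lemma~\ref{lem:consistency}), this issue has no downstream effect, but your sketch as written does not establish the bijection in the range $\ga\le\tfrac12$.
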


\subsubsection{Controlled biprocesses and integration}\label{subsec:main}

Following again the ideas of \cite{deya-schott}, let us now turn to the presentation of the class of integrands we shall focus on. As usual, a few topological considerations need to be introduced first. For $V:=(\ca^{(d)})^{\otimes n}$ ($n=1,2,3$), we denote by $\cac_1([0,1];V)$ the set of continuous $V$-valued maps on $[0,1]$, and by $\cac_2([0,1];V)$ the set of continuous $V$-valued maps on the simplex $\{0\leq s\leq t\leq 1\}$ that vanish on the diagonal. Then for every $\al\in (0,1)$, we define the $\al$-Hölder space $\cac_1^\al([0,1];V)$, resp. $\cac_2^\al([0,1];V)$, as the subset of paths $h\in \cac_1([0,1];V)$, resp. $h\in \cac_2([0,1];V)$, for which the following seminorm is finite:
$$\cn[h;\cac_1^\al([0,1];V)]:= \sup_{0\leq s<t \leq 1} \frac{\|\der h_{st}\|}{\lln t-s \rrn^\al}  \quad , \quad \text{resp.} \quad \cn[h;\cac_2^\al([0,1];V)]:=\sup_{0\leq s<t \leq 1} \frac{\| h_{st}\|}{\lln t-s \rrn^\al} \ .$$

\begin{definition}\label{def:control-biproc}
We call \emph{controlled biprocess} on $[0,1]$ any process $\bu \in \cac_1^\ga([0,1];(\ca^{(d)})^{\otimes 2})$ whose increments can be expanded as
\begin{equation}\label{decompo-bipro}
(\der \bu)_{st}=(\der X^{(d)})_{st} \sharp \mathcal{U}_s^{X,1} +\mathcal{U}_s^{X,2} \sharp (\der X^{(d)})_{st}+ \bu^\flat_{st} \ , \quad 0\leq s\leq t\leq 1  \ ,
\end{equation}
for some processes $\mathcal{U}^{X,1},\mathcal{U}^{X,2}\in \cac_1^{\ga}([0,1];(\ca^{(d)})^{\otimes 3})$ and $\bu^\flat \in \cac_2^{2\ga}([0,1];(\ca^{(d)})^{\otimes 2})$. In the sequel, we denote by $\mathbf{Q}$ the space of controlled biprocesses on $[0,1]$.
\end{definition}

\smallskip

Of course, the conditions in the above definition must all be understood in an almost-sure sense. A basic example of such a controlled biprocess is provided by the path $\bu_t:=P(X^{(d)}_t) \otimes Q( X^{(d)}_t)$, for fixed polynomials $P,Q$. It is indeed easy to check that the increments of $\bu$ can be expanded as in (\ref{decompo-bipro}), with
$$\mathcal{U}^{X,1}_s := \partial P\big(X^{(d)}_s\big)  \otimes Q\big(X^{(d)}_s\big) \quad , \quad \mathcal{U}^{X,2}_s:=P\big( X^{(d)}_s\big) \otimes \partial Q\big(X^{(d)}_s\big) \ ,$$
where, in this algebra setting, we define the derivative $\partial P(X)$ as the linear extension of the formula $\partial X^m=\sum_{i=0}^{m-1} X^i \otimes X^{m-1-i}$. More examples of controlled biprocesses, related to the so-called controlled processes, can be derived from \cite[Proposition 4.10]{deya-schott}.

\

In this finite-dimensional setting, controlled biprocesses happen to be a particular case of $X^{(d)}$-controlled path (in the sense of Gubinelli \cite{gubi}), which allows us to go ahead with the analogy between \cite{deya-schott} and standard rough paths theory:

\begin{lemma}\label{lem:link-controlled-paths}
Let $\bu \in \mathbf{Q}$ with decomposition (\ref{decompo-bipro}). Then for all fixed $1\leq i,j\leq d$, the ($d^2$-dimensional) path $(\bu((i,k),(\ell,j)))_{1\leq k,\ell\leq d}$ is controlled (in the classical sense of \cite[Definition 4.6]{friz-hairer}) with respect to $(X(m,n))_{1\leq m,n\leq d}$, with Gubinelli derivative given for all $s\in [0,1]$ and $1\leq k,\ell,m,n\leq d$ by 
$$\bu_s'((i,j);(k,\ell),(m,n)):=\mathcal{U}^{X,1}_s((i,m),(n,k),(\ell,j))+\mathcal{U}^{X,2}_s((i,k),(\ell,m),(n,j)) \ .$$
\end{lemma}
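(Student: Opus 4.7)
The proof is essentially a bookkeeping exercise: one unfolds the definition of $\sharp$ between $\ca^{(d)}$ and $(\ca^{(d)})^{\otimes 3}$ in the canonical basis and reads off coordinates from the abstract decomposition (\ref{decompo-bipro}). The plan is to establish two explicit coordinate formulas, plug them into (\ref{decompo-bipro}), and finally check that the regularity requirements of a controlled path (in the sense of \cite{friz-hairer}) are automatically inherited from those in Definition \ref{def:control-biproc}.

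First, I would compute the $\sharp$-action on basis tensors. For $Y\in \ca^{(d)}$ and elementary matrices, one has $E_{ab}\,Y\,E_{cd'}=Y(b,c)\,E_{a,d'}$, hence by bilinearity
\[
Y\sharp (E_{ab}\otimes E_{cd'}\otimes E_{ef}) = Y(b,c)\, E_{a,d'}\otimes E_{ef}\ ,\qquad
(E_{ab}\otimes E_{cd'}\otimes E_{ef}) \sharp Y = Y(d',e)\, E_{ab}\otimes E_{c,f}\ .
\]
Extending linearly to arbitrary $\mathcal{U}\in (\ca^{(d)})^{\otimes 3}$ and extracting the $(E_{ik}\otimes E_{\ell j})$-coefficient, the only surviving constraints are $a=i$, $d'=k$, $e=\ell$, $f=j$ in the first case, and $a=i$, $b=k$, $c=\ell$, $f=j$ in the second. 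After renaming the remaining summation indices to $m,n$, this produces the two identities
\[
\big(Y\sharp \mathcal{U}\big)\!\big((i,k),(\ell,j)\big)=\sum_{m,n=1}^d Y(m,n)\,\mathcal{U}\big((i,m),(n,k),(\ell,j)\big)\ ,
\]
\[
\big(\mathcal{U}\sharp Y\big)\!\big((i,k),(\ell,j)\big)=\sum_{m,n=1}^d Y(m,n)\,\mathcal{U}\big((i,k),(\ell,m),(n,j)\big)\ .
\]

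Second, I would apply these two formulas with $Y=\der X^{(d)}_{st}$ and $\mathcal{U}=\mathcal{U}^{X,1}_s,\mathcal{U}^{X,2}_s$ to the decomposition (\ref{decompo-bipro}). Summing the two contributions exactly recovers the announced Gubinelli derivative $\bu_s'((i,j);(k,\ell),(m,n))$ as the scalar coefficient of $\der X^{(d)}_{st}(m,n)$, while the remainder is the scalar component $\bu^\flat_{st}((i,k),(\ell,j))$. Thus, for each fixed $i,j$, the finite-dimensional path $(\bu((i,k),(\ell,j)))_{k,\ell}$ admits precisely the expansion required by \cite[Definition 4.6]{friz-hairer}.

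Finally, I would verify the two H\"older conditions entering the definition of a controlled path: the Gubinelli derivative must lie in $\cac_1^\gamma$ and the remainder in $\cac_2^{2\gamma}$. Both are immediate from Definition \ref{def:control-biproc}: the former follows from $\mathcal{U}^{X,1},\mathcal{U}^{X,2}\in \cac_1^\gamma([0,1];(\ca^{(d)})^{\otimes 3})$ together with the evident domination of single coordinates by the global tensor norm, and the latter from $\bu^\flat\in \cac_2^{2\gamma}([0,1];(\ca^{(d)})^{\otimes 2})$ by the same elementary bound. There is no genuine difficulty here; the only point that requires care is keeping the index order straight when unfolding the two $\sharp$-operations, which is the main (and essentially the only) obstacle in the argument.
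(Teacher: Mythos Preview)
Your proposal is correct and is exactly the direct coordinate expansion the paper has in mind; the paper does not spell out a proof for this lemma, treating it as a readily-checked consequence of unfolding the $\sharp$-operations in the canonical basis, which is precisely what you do.
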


\smallskip

Given a product Lévy area $\mathbf{X}$ above $X^{(d)}$, we denote the \enquote{dual} of $\mathbf{X}$ as $\mathbf{X}^{\ast}$, that is 
$$\mathbf{X}^{\ast}_{st}[U\otimes V]:=\mathbf{X}_{st}[V^\ast \otimes U^\ast]^\ast \quad , \quad  \text{for all} \  U,V\in \ca^{(d)}\ .$$
Our interpretation of the integral against $X^{(d)}$ can now be read as follows (as an application of \cite[Proposition 4.12]{deya-schott}):
\begin{proposition}\label{prop-int-gen}
Let $\mathbf{X}$ be a product Lévy area above $X^{(d)}$. Then for every $\bu\in \mathbf{Q}$ with decomposition (\ref{decompo-bipro}), all $0\leq s\leq t\leq 1$ and every subdivision $D_{st} = \{t_0=s<t_1 <\ldots<t_n=t\}$ of $[s,t]$ with mesh $|D_{st}|$ tending to $0$, the corrected Riemann sum
\begin{equation}\label{corrected-riemmann-sums}
\sum_{t_i\in D_{st}} \Big\{ \bu_{t_i} \sharp (\der X^{(d)})_{t_it_{i+1}}+[\mathbf{X}_{t_it_{i+1}}\times \id](\mathcal{U}^{X,1}_{t_i})+[\id \times \mathbf{X}^{\ast}_{t_it_{i+1}}](\mathcal{U}^{X,2}_{t_i})\Big\}
\end{equation}
converges almost surely in $\ca^{(d)}$ as $|D_{st}| \to 0$. We call the limit the \emph{rough integral} (from $s$ to $t$) of $\bu$ against $\mathbb{X}:=(X^{(d)},\mathbf{X})$, and denote it by $\int_s^t \bu_u \sharp \mathrm{d}\mathbb{X}_u$. 
\end{proposition}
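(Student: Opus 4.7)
The plan is to reduce the statement to a direct application of the sewing (or Young-Loève) principle, exactly as in \cite[Proposition 4.12]{deya-schott}, by verifying that the corrected Riemann increments define an \emph{almost-additive} germ of order strictly greater than $1$. Concretely, I would set
$$J_{st}:=\bu_s \sharp (\der X^{(d)})_{st}+[\mathbf{X}_{st}\times \id](\mathcal{U}^{X,1}_s)+[\id \times \mathbf{X}^{\ast}_{st}](\mathcal{U}^{X,2}_s), \qquad 0\leq s\leq t\leq 1,$$
and check that the defect $\der J_{sut}:=J_{st}-J_{su}-J_{ut}$ satisfies $\|\der J_{sut}\|\lesssim |t-s|^{3\ga}$ (with $3\ga>1$), uniformly for $0\le s\le u\le t\le 1$. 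Once this is established, the sewing argument guarantees that the Riemann sums in (\ref{corrected-riemmann-sums}) form a Cauchy net as $|D_{st}|\to 0$, which yields the almost-sure convergence.

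The computation of $\der J_{sut}$ splits naturally into three contributions. For the $\bu \sharp \der X$ term, one has
$$\bu_s\sharp \der X_{st}-\bu_s\sharp \der X_{su}-\bu_u \sharp \der X_{ut}=-(\der \bu)_{su}\sharp \der X_{ut},$$
and I would expand $(\der \bu)_{su}$ via the controlled decomposition (\ref{decompo-bipro}). This produces three pieces: two of the form $(\der X_{su}\sharp \mathcal{U}^{X,1}_s)\sharp \der X_{ut}$ and $(\mathcal{U}^{X,2}_s\sharp \der X_{su})\sharp \der X_{ut}$, plus a remainder involving $\bu^\flat_{su}\sharp \der X_{ut}$ which is already of order $|t-s|^{3\ga}$. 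On the other hand, for the Lévy-area terms one can write
$$\mathbf{X}_{st}[\mathcal{U}^{X,1}_s]-\mathbf{X}_{su}[\mathcal{U}^{X,1}_s]-\mathbf{X}_{ut}[\mathcal{U}^{X,1}_u]=\bigl(\mathbf{X}_{st}-\mathbf{X}_{su}-\mathbf{X}_{ut}\bigr)[\mathcal{U}^{X,1}_s]-\mathbf{X}_{ut}[(\der \mathcal{U}^{X,1})_{su}]$$
(together with its $\sharp \id$ extension), and apply the product Chen identity (\ref{chen}) to rewrite the first summand, and analogously for $\mathcal{U}^{X,2}$ using the dual area $\mathbf{X}^{\ast}$.

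The key step is then to observe that the Chen-defect contributions from the two Lévy-area terms cancel exactly against the quadratic-in-$\der X$ terms produced by expanding $(\der\bu)_{su}$: this is where the specific structure of the definitions of $\mathbf{X}^{\ast}$ and the two $\sharp$-operations on $(\ca^{(d)})^{\otimes 3}$ is used, and it is the main technical point of the argument (it is in fact the reason why \textit{two} correction terms are needed in (\ref{corrected-riemmann-sums}), one on each side). After this cancellation, the remaining terms are $\bu^\flat_{su}\sharp \der X_{ut}$, $\mathbf{X}_{ut}[(\der\mathcal{U}^{X,1})_{su}]$ and $\mathbf{X}^{\ast}_{ut}[(\der\mathcal{U}^{X,2})_{su}]$, each of which is bounded by a constant times $|t-s|^{3\ga}$ by the regularity assumptions on $\bu^\flat$, $\mathcal{U}^{X,1}$, $\mathcal{U}^{X,2}$ and the $2\ga$-roughness of $\mathbf{X}$ (and hence of $\mathbf{X}^{\ast}$).

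Having established $\|\der J_{sut}\|\lesssim |t-s|^{3\ga}$ with $3\ga>1$ almost surely, the sewing lemma (in the form used in \cite[Section 4]{deya-schott}) concludes the proof: the associated Riemann sums converge, the limit defines the desired integral $\int_s^t \bu_u \sharp \mathrm{d}\mathbb{X}_u$, and the whole construction depends only on the data $\mathbb{X}=(X^{(d)},\mathbf{X})$ and on the controlled decomposition of $\bu$. The most delicate part is unquestionably the bookkeeping of the Chen-type cancellation in the algebra $\ca^{(d)}\otimes \ca^{(d)}\otimes \ca^{(d)}$; once this is handled the bounds are routine.
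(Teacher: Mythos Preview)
Your proposal is correct and follows exactly the approach of \cite[Proposition 4.12]{deya-schott}, which is all the paper invokes here (it gives no independent proof, merely citing that result). Your sewing-lemma argument with the Chen-type cancellation between the Lévy-area corrections and the quadratic terms from $(\der\bu)_{su}$ is precisely the content of that reference.
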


Using straightforward pathwise expansions, this interpretation can again be related to more standard rough constructions: 

\begin{lemma}\label{lem:consistency}
Assume that we are given a product Lévy area $\mathbf{X}$ above $X^{(d)}$ and consider $\mathbb{X}:=(X^{(d)},\mathbf{X})$. Then for all $\bu\in \mathbf{Q}$ and $1\leq i,j\leq d$, one has almost surely
\begin{equation}\label{consist-gene}
\Big(\int_0^1 \bu_u \sharp \mathrm{d}\mathbb{X}_u\Big)(i,j)=\int_0^1\sum_{k,\ell=1}^d  \bu_u((i,k),(\ell,j))\mathrm{d}X^{(d)}_u(k,\ell)  \ ,
\end{equation}
where the latter integral is interpreted as the rough integral (in the sense of \cite[Theorem 4.10]{friz-hairer}) of the controlled path $(\bu((i,k),(\ell,j)))_{1\leq k,\ell\leq d}$
(along Lemma \ref{lem:link-controlled-paths}), considering the (classical) L{\'e}vy area derived from $\mathbf{X}$ through relation (\ref{correspondence-levy-areas}). 
\end{lemma}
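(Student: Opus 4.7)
The plan is to realise both sides of (\ref{consist-gene}) as limits of corrected Riemann sums along partitions with vanishing mesh, and then to verify that the two sums agree term-by-term. The left-hand side is the $(i,j)$-component of the limit of (\ref{corrected-riemmann-sums}) by Proposition~\ref{prop-int-gen}, while the right-hand side is the limit of the standard Davie sum associated with the controlled path $(\bu((i,k),(\ell,j)))_{1\leq k,\ell\leq d}$ identified in Lemma~\ref{lem:link-controlled-paths}. Since both limits exist (both under the assumption $\ga>\frac13$), it suffices to verify that, for every partition $D_{st}$ and every step $t_p$, the $(i,j)$-entry of the summand in (\ref{corrected-riemmann-sums}) equals the corresponding summand in the classical Davie sum.

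For the principal term, this is immediate from the definition of $\sharp$ on $(\ca^{(d)})^{\otimes 2}\otimes \ca^{(d)}$, namely
\begin{equation*}
(\bu_{t_p}\sharp \der X^{(d)}_{t_pt_{p+1}})(i,j)=\sum_{k,\ell=1}^{d}\bu_{t_p}((i,k),(\ell,j))\,\der X^{(d)}_{t_pt_{p+1}}(k,\ell),
\end{equation*}
which is the leading Riemann term of the classical rough integral.

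For the L\'evy-area correction, I would interpret $[\mathbf{X}\times \id]$ and $[\id\times \mathbf{X}^{\ast}]$ through their natural action on rank-one tensors, $A\otimes B\otimes C\mapsto \mathbf{X}[A\otimes B]\cdot C$ and $A\otimes B\otimes C\mapsto A\cdot \mathbf{X}^{\ast}[B\otimes C]$ respectively, extended by linearity. Inserting (\ref{correspondence-levy-areas}) and relabelling indices, a direct calculation yields
\begin{equation*}
([\mathbf{X}_{t_pt_{p+1}}\times \id](\mathcal{U}^{X,1}_{t_p}))(i,j)=\sum_{k,\ell,m,n}\mathcal{U}^{X,1}_{t_p}((i,m),(n,k),(\ell,j))\,\mathbf{X}^{\mathbf{2}}_{t_pt_{p+1}}((m,n),(k,\ell)),
\end{equation*}
and similarly
\begin{equation*}
([\id\times \mathbf{X}^{\ast}_{t_pt_{p+1}}](\mathcal{U}^{X,2}_{t_p}))(i,j)=\sum_{k,\ell,m,n}\mathcal{U}^{X,2}_{t_p}((i,k),(\ell,m),(n,j))\,\mathbf{X}^{\mathbf{2}}_{t_pt_{p+1}}((m,n),(k,\ell)).
\end{equation*}
Adding these two expressions and comparing with the formula for the Gubinelli derivative $\bu'$ given in Lemma~\ref{lem:link-controlled-paths}, the total L\'evy-area correction of (\ref{corrected-riemmann-sums}) evaluated at $(i,j)$ is exactly $\sum_{k,\ell,m,n}\bu'_{t_p}((i,j);(k,\ell),(m,n))\,\mathbf{X}^{\mathbf{2}}_{t_pt_{p+1}}((m,n),(k,\ell))$, which is the iterated-integral correction in the classical Davie sum.

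The main (minor) obstacle is the index bookkeeping around the dual L\'evy area $\mathbf{X}^{\ast}$: one has to ensure that the permutation of slots induced by dualisation is precisely the one mandated by the $\mathcal{U}^{X,2}$-contribution to $\bu'$ in Lemma~\ref{lem:link-controlled-paths}. Once this purely algebraic check is carried out, the term-by-term identity holds on every partition, and passing to the limit $|D_{st}|\to 0$ yields (\ref{consist-gene}) almost surely.
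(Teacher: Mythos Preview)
Your approach is correct and is precisely what the paper has in mind: the paper offers no detailed argument for this lemma, merely introducing it with the phrase ``using straightforward pathwise expansions,'' and your term-by-term identification of the $(i,j)$-entry of the corrected Riemann sum (\ref{corrected-riemmann-sums}) with the classical Davie sum for the controlled path of Lemma~\ref{lem:link-controlled-paths} is exactly that expansion. Your flagging of the $\mathbf{X}^{\ast}$ bookkeeping as the only delicate point is accurate, and once that index check is completed the result follows.
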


\begin{remark}\label{rk:convergence-l-p}
We have chosen to express these integration results in an almost-sure way, but, using again the considerations of \cite[Section 4]{deya-schott} and under suitable moment conditions on the integrands, this rough approach could also easily be formulated in some $L^p(\Omega)$-sense. For instance, if we assume that the random constant $c$ in (\ref{regu-levy-area}) admits finite moments of any order, and if the expansion (\ref{decompo-bipro}) of the integrand $\bu$ is such that
$$\mathbb{E}\big[ \big|\cn[\mathcal{U}^{X,i};\cac_1^{\ga}([0,1];(\ca^{(d)})^{\otimes 3})]\big|^r \big] < \infty \quad \text{and} \quad  \mathbb{E}\big[ \big|\cn[\bu^\flat ; \cac_2^{2\ga}([0,1];(\ca^{(d)})^{\otimes 2})]\big|^r \big] < \infty$$
for every $r\geq 1$, then the convergence in Proposition (\ref{prop-int-gen}) holds true in $L^p(\Omega)$ as well, for every $p\geq 1$. This is in fact a direct consequence of the control derived from the so-called sewing map (\cite[Theorem 4.2]{deya-schott}).
\end{remark}

\begin{remark}
The above Definition \ref{defi:aire-gene}, Definition \ref{def:control-biproc} and Proposition \ref{prop-int-gen} are thus essentially borrowed from \cite[Section 4]{deya-schott}, where integration with respect to a Hölder driver $X$ in a general algebra $\ca$ is considered. It is worth mentioning however that additional \enquote{adaptedness} conditions arise in \cite{deya-schott}: for instance, it is therein assumed that for every $t\in [0,1]$, $\bu_t$, resp. $\mathcal{U}^{X,1}_t,\mathcal{U}^{X,2}_t$, in (\ref{decompo-bipro}) belongs to the algebra $\ca_t$ generated by $(X_u)_{0\leq u\leq t}$, resp. to the tensor product $\ca_t\otimes \ca_t$, and conditions (\ref{regu-levy-area})-(\ref{chen}) only need to be satisfied for $\bu\in \ca_s\otimes \ca_s$. The latter restriction turns out to be fundamental when it comes to the construction of a product Lévy area above the free Brownian motion (see \cite[Section 5.1]{deya-schott}), the $q$-Brownian motion (see \cite[Section 3]{deya-schott-2}) or the non-commutative fractional Brownian motion (see \cite[Section 3]{deya-schott-3}). This is no longer the case in the finite-dimensional setting of the HfBm, thanks to the coordinates correspondence (\ref{correspondence-levy-areas}), and we thus got rid of the adaptedness conditions in the above formulation.
\end{remark}

\subsection{Canonical product Lévy area}

\

\smallskip

For every $d\geq 1$, we can apply the result of \cite[Theorem 2]{coutin-qian} to assert that the $(d(d+1))$-dimensional process 
$$\{x(i,j),\tilde{x}(i,j)\}_{1\leq j\leq i\leq d}$$
behind $X^{(d)}$ (along definition (\ref{defi-hfbm})) generates a canonical (classical) Lévy area. Denoting the components of this Lévy area as
\begin{equation}\label{1d-levy-areas}
\int_s^t \der x_{su}(i,j)\,  \mathrm{d}x_{u}(k,\ell) \ , \ \int_s^t \der x_{su}(i,j)\,  \mathrm{d}\tilde{x}_{u}(k,\ell) \ , \ \int_s^t  \der \tilde{x}_{su}(i,j)\,  \mathrm{d}x_{u}(k,\ell) \ , \ \int_s^t \der \tilde{x}_{su}(i,j) \,  \mathrm{d}\tilde{x}_{u}(k,\ell) \ ,
\end{equation}
we can then naturally lift this extension at the level of $X^{(d)}$ through the formula
\begin{equation}\label{levy-area-d}
\mathbf{X}^{\mathbf{2},(d)}_t((i,j),(k,\ell)):=\int_s^t \der X^{(d)}_{su}(i,j) \mathrm{d}X^{(d)}_u(k,\ell) \ , \quad 1\leq i,j,k,\ell \leq d \ ,
\end{equation}
which now admits a straightforward interpretation: for instance, if $i\geq j$ and $k\geq \ell$,
\small
\begin{align*}
&\mathbf{X}^{\mathbf{2},(d)}_{st}((i,j),(k,\ell))=\int_s^t \big( \der x_{su}(i,j)+\imath\,  \der \tilde{x}_{su}(i,j) \big)\,  \mathrm{d}\big( x_{u}(k,\ell)+\imath\,  \tilde{x}_{u}(k,\ell) \big):=\\
&\int_s^t \der x_{su}(i,j)\,  \mathrm{d}x_{u}(k,\ell)+\imath\int_s^t \der x_{su}(i,j)\,  \mathrm{d}\tilde{x}_{u}(k,\ell)+\imath \int_s^t  \der \tilde{x}_{su}(i,j)\,  \mathrm{d}x_{u}(k,\ell)-\int_s^t \der \tilde{x}_{su}(i,j) \,  \mathrm{d}\tilde{x}_{u}(k,\ell) \ .
\end{align*}
\normalsize
It is readily checked that $\mathbf{X}^{\mathbf{2},(d)}$ defines a (classical) Lévy area above $X^{(d)}$, with which we can immediately associate, through (\ref{correspondence-levy-areas}), a product Lévy area $\mathbf{X}^{(d)}$ above $X^{(d)}$. The resulting rough integral
\begin{equation}\label{int-canon-x-d}
\Big(\int_0^1 \bu_u \sharp \mathrm{d}\mathbb{X}^{(d)}_u\Big) \quad , \quad \text{where} \quad \mathbb{X}^{(d)}:=(X^d,\mathbf{X}^{(d)}) \ \text{and} \ \bu\in \mathbf{Q} \ ,
\end{equation}
then offers what can be regarded as a \enquote{canonical} interpretation of the integral against the HfBm of Hurst index $H>\frac13$.

\subsection{A matrix Itô--Stratonovich formula}

\

\smallskip

In the specific Brownian situation, that is when $H=\frac12$, the integrals in (\ref{1d-levy-areas}) can either be understood as Itô or as Stratonovich integrals. Let us respectively denote by $\mathbf{X}^{(d),\text{I}}$ and $\mathbf{X}^{(d),\text{S}}$ the product Lévy areas associated with each of these interpretations, and then set $\mathbb{X}^{(d),\text{I}}:=(X^{(d)},\mathbf{X}^{(d),\text{I}})$, $\mathbb{X}^{(d),\text{S}}:=(X^{(d)},\mathbf{X}^{(d),\text{S}})$. Owing to the consistency result of Lemma \ref{lem:consistency}, we can assert that for any \textit{adapted} controlled byprocess $\bu$ (i.e., $\bu$ and $\mathcal{U}^{X,1},\mathcal{U}^{X,2}$ in (\ref{decompo-bipro}) are adapted to the filtration generated by $\{x(i,j),\tilde{x}(i,j)\}_{1\leq j\leq i\leq d}$), the rough integral $\int_0^1 \bu_u \sharp \mathrm{d}\mathbb{X}^{(d),\text{I}}_u$, resp. $\int_0^1 \bu_u \sharp \mathrm{d}\mathbb{X}^{(d),\text{S}}_u$, coincides with the standard (componentwise) Itô, resp. Stratonovich, interpretation, a property which we can summarize as 
\begin{equation}\label{identif-pathwise}
\int_0^1 \bu_u \sharp \mathrm{d}\mathbb{X}^{(d),\text{I}}_u=\int_0^1 \bu_u \sharp \mathrm{d}X^{(d)}_u \quad , \quad \text{resp.}\quad \int_0^1 \bu_u \sharp \mathrm{d}\mathbb{X}^{(d),\text{S}}_u=\int_0^1 \bu_u \sharp (\circ  \mathrm{d}X^{(d)}_u) \ .
\end{equation}
Let us now rely on the above pathwise matrix approach (i.e., on the interpretation of these integrals as almost-sure limits of the sum in (\ref{corrected-riemmann-sums})) in order to establish an Itô--Stratonovich formula, that is a convenient description of the difference $\int_0^1 \bu_u \sharp (\circ  \mathrm{d}X^{(d)}_u) -\int_0^1 \bu_u \sharp \mathrm{d}X^{(d)}_u$. In fact, it is now clear that the fundamental difference between these two integrals lies at the level of the related product Lévy areas, and therefore we only need to focus on the difference $\mathbf{X}^{(d),\text{S}}_{st}-\mathbf{X}^{(d),\text{I}}_{st}$:

\begin{lemma}\label{lem:ito-strato-levy}
Assume that $H=\frac12$. Then, for all $0\leq s\leq t\leq 1$ and all random variables $U,V$ in $\ca^{(d)}$, one has almost surely
\begin{equation}\label{ito-strato-levy-areas}
\mathbf{X}^{(d),\text{S}}_{st}\big[ U\otimes V\big]=\mathbf{X}^{(d),\text{I}}_{st}\big[ U\otimes V\big]+\frac12 (t-s) \text{Tr}_d(V)\, U \ ,
\end{equation}
where $\text{Tr}_d(A):=\frac{1}{d} \sum_{i=1}^d A(i,i)$.
\end{lemma}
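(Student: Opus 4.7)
The plan is to reduce the identity to a computation at the level of the scalar (classical) L\'evy area, using the correspondence \eqref{correspondence-levy-areas} which expresses $\mathbf{X}^{(d)}$ linearly in $\mathbf{X}^{\mathbf{2},(d)}$. Since this correspondence is the same for the It\^o and Stratonovich lifts, one has
\begin{equation*}
\big(\mathbf{X}^{(d),\text{S}}_{st}-\mathbf{X}^{(d),\text{I}}_{st}\big)[U\otimes V](i,j)=\sum_{k,\ell_1,\ell_2=1}^d U(i,k)V(\ell_1,\ell_2)\, \Delta_{st}((k,\ell_1),(\ell_2,j)),
\end{equation*}
where $\Delta_{st}((a,b),(c,d)):=\mathbf{X}^{\mathbf{2},(d),\text{S}}_{st}((a,b),(c,d))-\mathbf{X}^{\mathbf{2},(d),\text{I}}_{st}((a,b),(c,d))$. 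So the whole problem is to evaluate this $\Delta_{st}$.

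From the classical theory of Brownian stochastic integration, $\Delta_{st}((a,b),(c,d))$ is simply $\frac12$ times the quadratic covariation $\langle X^{(d)}(a,b),X^{(d)}(c,d)\rangle_{st}$. The next step is therefore to compute this covariation for HfBm entries at $H=\frac12$ from the definition \eqref{defi-hfbm}. Using that the families $(x(i,j))_{i\geq j}$ and $(\tilde x(i,j))_{i\geq j}$ are independent standard Brownian motions, and that for complex processes $\langle u+\imath\tilde u,\,v+\imath \tilde v\rangle$ is the bilinear (not sesquilinear) bracket, a short case analysis (diagonal entries, off-diagonal entries with $(c,d)=(a,b)$, off-diagonal entries with $(c,d)=(b,a)$, and all remaining independent cases) yields the clean formula
\begin{equation*}
\langle X^{(d)}(a,b),X^{(d)}(c,d)\rangle_{st}=\frac{t-s}{d}\,\delta_{a,d}\,\delta_{b,c}.
\end{equation*}
In particular the only source of correction is the conjugation relation $X^{(d)}(j,i)=\overline{X^{(d)}(i,j)}$ that makes off-diagonal entries correlated with their transposes; the prefactor $\frac1d$ is the normalisation built into \eqref{defi-hfbm}. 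I expect this step to be the one requiring the most care, as one must verify that the $\frac{1}{\sqrt{2d}}$ splitting into real and imaginary parts conspires so that $\langle X^{(d)}(i,j),X^{(d)}(i,j)\rangle_{st}=0$ while $\langle X^{(d)}(i,j),X^{(d)}(j,i)\rangle_{st}=\frac{t-s}{d}$ for $i\neq j$.

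Substituting into the expression for $\mathbf{X}^{(d),\text{S}}_{st}-\mathbf{X}^{(d),\text{I}}_{st}$ then gives
\begin{equation*}
\big(\mathbf{X}^{(d),\text{S}}_{st}-\mathbf{X}^{(d),\text{I}}_{st}\big)[U\otimes V](i,j)=\frac{t-s}{2d}\sum_{k,\ell_1,\ell_2} U(i,k)V(\ell_1,\ell_2)\delta_{k,j}\delta_{\ell_1,\ell_2}=\frac{t-s}{2d}\,U(i,j)\sum_{\ell_1}V(\ell_1,\ell_1),
\end{equation*}
and recognising $\sum_{\ell_1}V(\ell_1,\ell_1)=d\cdot\text{Tr}_d(V)$ produces exactly \eqref{ito-strato-levy-areas}. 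Finally, since both sides are continuous bilinear in $(U,V)$, the identity for random $U,V$ in $\ca^{(d)}$ reduces entrywise to the deterministic identity already established for elementary matrices.
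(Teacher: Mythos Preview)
Your argument is correct and follows essentially the same route as the paper's proof: both use the correspondence \eqref{correspondence-levy-areas} to reduce to the scalar L\'evy areas, then compute the It\^o--Stratonovich correction entrywise, and finally resum. The only difference is cosmetic: you package the entrywise correction as the quadratic covariation $\langle X^{(d)}(a,b),X^{(d)}(c,d)\rangle_{st}=\frac{t-s}{d}\,\delta_{a,d}\delta_{b,c}$, whereas the paper writes out the same computation as two explicit conversion formulas for $\int_s^t \delta X^{(d)}_{su}(k,\ell_1)(\circ\, \mathrm{d}X^{(d)}_u(\ell_2,j))$ (one for $(\ell_2,j)\neq(\ell_1,k)$ with zero correction, one for $(\ell_2,j)=(\ell_1,k)$ with correction $\frac{1}{2d}(t-s)$), verifying each by expanding into real and imaginary Brownian parts exactly as you sketch.
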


\begin{proof}
The identity essentially follows from the application of the classical 1d Itô--Stratonovich formula to the Lévy areas in (\ref{1d-levy-areas}), which, at the level of $X^{(d)}$, yields the two (almost sure) conversion formulas
\begin{equation}\label{conversion-1}
\int_s^t \der X^{(d)}_{su}(k,\ell_1)(\circ \mathrm{d}X^{(d)}_u(\ell_2,j))=\int_s^t \der X^{(d)}_{su}(k,\ell_1) \mathrm{d}X^{(d)}_u(\ell_2,j) \quad \text{if} \ (\ell_2,j)\neq (\ell_1,k) \ , 
\end{equation}
and
\begin{equation}\label{conversion-2}
\int_s^t \der X^{(d)}_{su}(k,\ell_1)(\circ \mathrm{d}X^{(d)}_u(\ell_1,k))=\int_s^t \der X^{(d)}_{su}(k,\ell_1)\mathrm{d}X^{(d)}_u(\ell_1,k)+\frac{1}{2d} (t-s) \ .
\end{equation}
Indeed, for (\ref{conversion-1}), observe first that if $\{\ell_2,j\}\neq \{\ell_1,k\}$, then the components of $X^{(d)}(k,\ell_1)$ and $X^{(d)}(\ell_2,j)$ are independent (complex) Brownian motions, so that the related Itô and Stratonovich integrals do coincide (a.s.). On the other hand, if $k>\ell_1$, one has a.s.
\begin{align*}
&\int_s^t \der X^{(d)}_{su}(k,\ell_1) (\circ \mathrm{d} X^{(d)}_u(k,\ell_1)) = \overline{\int_s^t \der X^{(d)}_{su}(\ell_1,k) (\circ \mathrm{d} X^{(d)}_u(\ell_1,k))}\\
&=\frac{1}{2d} \int_s^t (\der x_{su}(k,\ell_1)+\imath \, \der \tilde{x}_{su}(k,\ell_1)) \circ \mathrm{d}(x_u(k,\ell_1)+\imath \, \tilde{x}_u(k,\ell_1))\\
&=\frac{1}{2d} \bigg[ \Big\{\int_s^t \der x_{su}(k,\ell_1)\mathrm{d}x_u(k,\ell_1)+\frac12 (t-s)\Big\}+\imath \int_s^t \der x_{su}(k,\ell_1) \mathrm{d} \tilde{x}_u(k,\ell_1)\\
&\hspace{2cm}+\imath\int_s^t \der \tilde{x}_{su}(k,\ell_1)\mathrm{d}x_u(k,\ell_1)-\Big\{\int_s^t \der \tilde{x}_{su}(k,\ell_1) \mathrm{d}\tilde{x}_u(k,\ell_1)+\frac12(t-s) \Big\} \bigg]\\
&=\int_s^t \der X^{(d)}_{su}(k,\ell_1) \mathrm{d} X^{(d)}_u(k,\ell_1) \ .
\end{align*}
As for (\ref{conversion-2}), one has a.s., and along the same computations: for $k>\ell_1$,
\begin{align*}
&\int_s^t \der X^{(d)}_{su}(k,\ell_1) (\circ \mathrm{d} X^{(d)}_u(\ell_1,k)) = \overline{\int_s^t \der X^{(d)}_{su}(\ell_1,k) (\circ \mathrm{d} X^{(d)}_u(k,\ell_1))}\\
&=\frac{1}{2d} \int_s^t (\der x_{su}(k,\ell_1)+\imath \, \der \tilde{x}_{su}(k,\ell_1)) \circ \mathrm{d}(x_u(k,\ell_1)-\imath \, \tilde{x}_u(k,\ell_1))\\
&=\frac{1}{2d} \bigg[ \Big\{\int_s^t \der x_{su}(k,\ell_1)\mathrm{d}x_u(k,\ell_1)+\frac12 (t-s)\Big\}-\imath \int_s^t \der x_{su}(k,\ell_1) \mathrm{d} \tilde{x}_u(k,\ell_1)\\
&\hspace{2cm}+\imath\int_s^t \der \tilde{x}_{su}(k,\ell_1)\mathrm{d}x_u(k,\ell_1)+\Big\{\int_s^t \der \tilde{x}_{su}(k,\ell_1) \mathrm{d}\tilde{x}_u(k,\ell_1)+\frac12(t-s) \Big\} \bigg]\\
&=\int_s^t \der X^{(d)}_{su}(k,\ell_1) \mathrm{d} X^{(d)}_u(k,\ell_1)+\frac{1}{2d} (t-s) \ ,
\end{align*}
with a similar identity when $k=\ell_1$.

\smallskip

Based on (\ref{conversion-1})-(\ref{conversion-2}), we obtain a.s.
\begin{align*}
&\mathbf{X}^{(d),S}_{st}\big[U\otimes V\big](i,j)=\sum_{k,\ell_1,\ell_2=1}^d U(i,k)V(\ell_1,\ell_2) \int_s^t \der X^{(d)}_{su}(k,\ell_1) (\circ \mathrm{d}X^{(d)}_u(\ell_2,j))\\
&=\sum_{\substack{k,\ell_1,\ell_2=1\\(\ell_2,j)\neq (\ell_1,k)}}^d U(i,k)V(\ell_1,\ell_2) \int_s^t \der X^{(d)}_{su}(k,\ell_1) \mathrm{d}X^{(d)}_u(\ell_2,j)\\
& \hspace{1cm} +\sum_{\ell=1}^d U(i,j) V(\ell,\ell) \Big\{ \int_s^t \der X^{(d)}_{su}(k,\ell) \mathrm{d}X^{(d)}_u(\ell,k)+\frac{1}{2d} (t-s)\Big\}\\
&=\sum_{k,\ell_1,\ell_2=1}^d U(i,k)V(\ell_1,\ell_2) \int_s^t \der X^{(d)}_{su}(k,\ell_1)  \mathrm{d}X^{(d)}_u(\ell_2,j)+\frac12 (t-s)  \Big( \frac{1}{d} \sum_{\ell=1}^d V(\ell,\ell) \Big) U(i,j)\\
&=\mathbf{X}^{(d),I}_{st}\big[U\otimes V\big](i,j)+\frac12 (t-s) \text{Tr}_d(V) U(i,j) \ ,
\end{align*}
which corresponds to the desired identity.
\end{proof}

Injecting identity (\ref{ito-strato-levy-areas}) into (\ref{corrected-riemmann-sums}) immediately allows us to extend the conversion formula to a general level:
\begin{proposition}\label{prop:ito-strato}
Assume that $H=\frac12$. Then, for every adapted controlled process $\bu\in \mathbf{Q}$ with decomposition (\ref{decompo-bipro}), one has almost surely
\begin{equation}\label{ito-strato-matrix-gene}
\int_0^1 \bu_u \sharp (\circ  \mathrm{d}X^{(d)}_u)=\int_0^1 \bu_u \sharp \mathrm{d}X^{(d)}_u +\frac12 \int_0^1 \mathrm{d}u\, \big[ \id \times \text{Tr}_d \times \id\big] \big( \mathcal{U}^{X,1}_u+\mathcal{U}^{X,2}_u \big) \ .
\end{equation}
In particular, for all polynomials $P,Q$, one has almost surely
\begin{align}
&\int_0^1 P( X^{(d)}_u) (\circ \mathrm{d}X^{(d)}_u) Q( X^{(d)}_u)=\int_0^1 P( X^{(d)}_u) \mathrm{d}X^{(d)}_u Q( X^{(d)}_u)\nonumber\\
&\hspace{2cm} +\frac12 \int_0^1 \mathrm{d}u\, \big[ \id \times \text{Tr}_d \times \id\big] ( \partial P (X^{(d)}_u) \otimes Q(X^{(d)}_u)+P(X^{(d)}_u)\otimes \partial Q(X^{(d)}_u) ) \ .\label{ito-strato-matrix}
\end{align}
\end{proposition}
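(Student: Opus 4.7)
The plan is to exploit the fact that both $\int_0^1 \bu_u \sharp (\circ \mathrm{d}X^{(d)}_u)$ and $\int_0^1 \bu_u \sharp \mathrm{d}X^{(d)}_u$ coincide, via the identifications (\ref{identif-pathwise}), with the rough integrals against $\mathbb{X}^{(d),\text{S}}$ and $\mathbb{X}^{(d),\text{I}}$, each of which is described by Proposition \ref{prop-int-gen} as an almost-sure limit of corrected Riemann sums of the form (\ref{corrected-riemmann-sums}). For a common subdivision $D=\{0=t_0<\ldots<t_n=1\}$, the zeroth-order term $\bu_{t_i} \sharp \der X^{(d)}_{t_i t_{i+1}}$ is identical in both sums, so the entire discrepancy is carried by the L\'evy-area correctors $[\mathbf{X}_{t_i t_{i+1}} \times \id](\mathcal{U}^{X,1}_{t_i})$ and $[\id \times \mathbf{X}^{\ast}_{t_i t_{i+1}}](\mathcal{U}^{X,2}_{t_i})$. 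The strategy is therefore to translate Lemma \ref{lem:ito-strato-levy} into a quantitative identity at the level of these two correctors, and then pass to the limit.

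For an elementary tensor $V_1 \otimes V_2 \otimes V_3 \in (\ca^{(d)})^{\otimes 3}$, unfolding the definition of $[\mathbf{X}_{st} \times \id]$ and injecting (\ref{ito-strato-levy-areas}) immediately yields
\[
[\mathbf{X}^{(d),\text{S}}_{st} \times \id](V_1 \otimes V_2 \otimes V_3) - [\mathbf{X}^{(d),\text{I}}_{st} \times \id](V_1 \otimes V_2 \otimes V_3) = \frac12 (t-s)\, \text{Tr}_d(V_2)\, V_1 V_3,
\]
which is precisely $\frac12(t-s) [\id \times \text{Tr}_d \times \id](V_1 \otimes V_2 \otimes V_3)$. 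The main obstacle is the corresponding identity on the dual side: starting from $\mathbf{X}^{\ast}_{st}[U\otimes V] := \mathbf{X}_{st}[V^{\ast}\otimes U^{\ast}]^{\ast}$ and using (\ref{ito-strato-levy-areas}) together with the elementary conjugation $\overline{\text{Tr}_d(U^{\ast})} = \text{Tr}_d(U)$, one obtains $\mathbf{X}^{(d),\text{S},\ast}_{st}[U\otimes V] = \mathbf{X}^{(d),\text{I},\ast}_{st}[U\otimes V] + \frac12(t-s)\text{Tr}_d(U)\, V$; plugging this into $[\id \times \mathbf{X}^{\ast}_{st}](W_1 \otimes W_2 \otimes W_3)$ again produces $\frac12(t-s)\text{Tr}_d(W_2)\, W_1 W_3 = \frac12(t-s)[\id \times \text{Tr}_d \times \id](W_1 \otimes W_2 \otimes W_3)$. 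Both identities then extend to arbitrary tensors in $(\ca^{(d)})^{\otimes 3}$ by linearity.

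Summing the per-interval discrepancies over $t_i \in D$ thus produces
\[
\frac12 \sum_{t_i \in D} (t_{i+1} - t_i)\, [\id \times \text{Tr}_d \times \id]\big(\mathcal{U}^{X,1}_{t_i} + \mathcal{U}^{X,2}_{t_i}\big).
\]
Since $\mathcal{U}^{X,1}, \mathcal{U}^{X,2}$ lie in $\cac_1^{\ga}([0,1];(\ca^{(d)})^{\otimes 3})$ and $[\id \times \text{Tr}_d \times \id]$ is a bounded multilinear map into $\ca^{(d)}$, these Riemann sums converge almost surely as $|D|\to 0$ to $\frac12 \int_0^1 [\id \times \text{Tr}_d \times \id](\mathcal{U}^{X,1}_u + \mathcal{U}^{X,2}_u)\, \mathrm{d}u$, which is exactly (\ref{ito-strato-matrix-gene}). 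Finally, (\ref{ito-strato-matrix}) is obtained by specialising (\ref{ito-strato-matrix-gene}) to the controlled biprocess $\bu_u = P(X^{(d)}_u) \otimes Q(X^{(d)}_u)$, whose derivatives $\mathcal{U}^{X,1}_u = \partial P(X^{(d)}_u) \otimes Q(X^{(d)}_u)$ and $\mathcal{U}^{X,2}_u = P(X^{(d)}_u) \otimes \partial Q(X^{(d)}_u)$ were already identified just below Definition \ref{def:control-biproc}. In summary, once the dual computation is carried out, the whole argument reduces to a direct substitution into (\ref{corrected-riemmann-sums}) combined with elementary Riemann-sum convergence for continuous integrands.
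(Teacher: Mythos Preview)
Your proposal is correct and follows essentially the same approach as the paper: the paper's own proof consists of the single sentence ``Injecting identity (\ref{ito-strato-levy-areas}) into (\ref{corrected-riemmann-sums}) immediately allows us to extend the conversion formula to a general level'', and what you have written is precisely a careful unfolding of that sentence, including the dual computation for the $[\id\times\mathbf{X}^{\ast}]$-corrector and the Riemann-sum passage to the limit.
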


\

Identity (\ref{ito-strato-matrix-gene}), and even more explicitly identity (\ref{ito-strato-matrix}), thus corresponds to the matrix extension of the classical Itô--Stratonovich formula
\begin{align*}
\int_0^1 P( X^{(1)}_u) (\circ \mathrm{d}X^{(1)}_u) Q( X^{(1)}_u)=&\int_0^1 P( X^{(1)}_u) \mathrm{d}X^{(1)}_u Q( X^{(1)}_u)\\
&+\frac12 \int_0^1 \mathrm{d}u\, ( P'(X^{(1)}_u) Q(X^{(1)}_u)+P(X^{(1)}_u) Q'(X^{(1)}_u) ) \ .
\end{align*}
On the other hand, formulas (\ref{ito-strato-matrix-gene})-(\ref{ito-strato-matrix}) can somehow be seen as the finite-dimensional (and almost sure) counterpart of the Itô--Stratonovich formula for the free Brownian motion (see \cite[Proposition 5.6]{deya-schott}). The latter analogy will actually be emphasized through the convergence result in the subsequent Proposition \ref{prop:conv-integr-brown} (observe in particular the similarity between formulas (\ref{ito-strato-matrix}) and (\ref{strato-free})).

\begin{remark}
The above reasoning thus provides us with an illustration of the possibilities offered by the rough approach to stochastic integration with respect to $X^{(d)}$. Based on the interpretation of Proposition \ref{prop-int-gen}, we have indeed easily derived the general identity (\ref{ito-strato-matrix}) from the sole consideration of second-order objects. Proving this identity directly, that is through the stochastic componentwise interpretation of the integrals in (\ref{ito-strato-matrix}) (with use of the classical 1d Itô--Stratonovich conversion formula), would have been a much tougher task for polynomials $P,Q$ of high degrees.
\end{remark}

\section{From Hermitian to non-commutative fractional Brownian motion}\label{sec:asympt-results}

Our aim now is to study the transition, as the dimension parameter $d$ goes to infinity, from the HfBm to the so-called non-commutative fractional Brownian motion, and see how the convergence can be extended at the level of the related integrals. 

\smallskip

The asymptotic behaviour of the HfBm (or more precisely its \enquote{symmetric} counterpart) was already at the center of the investigations in \cite{matrix-approx}, at least when $H>\frac12$ and when focusing on the set of (random) measures $\{\mu^{(d)}_t\}_{t\geq 0}$ generated by the spectrum of $\{X^{(d)}_t\}_{t\geq 0}$ (note also that the convergence results of \cite{matrix-approx} do not apply to the integrals generated by $X^{(d)}$). 

\smallskip

We will here follow a slightly different approach and rather study convergence \emph{in the sense of non-commutative probability}, which, to our opinion, yields a better account of the mean spectral dynamics of the processes under consideration. Thus, as a first step, and for the sake of clarity, we need to briefly recall a few basics on the non-commutative probability setting (see \cite{nica-speicher} for more details).

\begin{definition}\label{defi:nc-probability-space}
We call a non-commutative probability space any pair $(\ca,\vp)$ such that:

\smallskip

\noindent
$(1)$ $\ca$ is a unital algebra over $\C$ endowed with an antilinear $\ast$-operation $X\mapsto X^\ast$ satisfying $(X^\ast)^\ast=X$ and $(XY)^\ast=Y^\ast X^\ast$ for all $X,Y\in \ca$. 

\smallskip

\noindent
$(2)$ $\vp:\ca \to \C$ is a \emph{positive trace} on $\ca$, that is a linear map satisfying $\vp(1)=1$, $\vp(XY)=\vp(YX)$ and $\vp(X^\ast X)\geq 0$ for all $X,Y\in \ca$.
\end{definition}

A classical way to \enquote{embed} the set of $d\times d$ random matrices (i.e., the set where $X^{(d)}$ lives) into such a structure is to consider the trace given by the mean value of the standard matrix trace. To be more specific, for each fixed $d\geq 1$, we focus on the unital algebra $\mathcal{M}_d(L^{\infty-}(\Omega))$ of matrices with complex random entries admitting finite moments of all orders, and set, for every $A\in \mathcal{M}_d(L^{\infty-}(\Omega))$,
\begin{equation}\label{defi:trace-vp-d}
\vp_d\big( A\big):=\frac{1}{d} \mathbb{E}\big[ \text{Tr}_d(A)\big] \ , \quad \text{where} \quad \text{Tr}_d(A):=\frac{1}{d} \sum_{i=1}^d A(i,i) \ .
\end{equation}
Beyond the fact that $\vp_d$ indeed satisfies the conditions in the above item $(2)$ (making the pair $(\mathcal{M}_d(L^{\infty-}(\Omega)),\vp_d)$ a non-commutative probability space), the interest in this particular trace lies of course in its close relation with the mean spectral distribution measure: for any $A\in \mathcal{M}_d(L^{\infty-}(\Omega))$ with (random) eigenvalues $\{\la_i(A)\}_{1\leq i\leq d}$, it is indeed readily checked that
$$\vp_d\big( A^r\big)=\mathbb{E}\Big[ \int_{\mathbb{C}} z^r \, \mu_A(\mathrm{d}z)\Big] \quad \text{where} \quad \mu_A:=\frac{1}{d} \sum_{i=1}^d \der_{\la_i(A)} \ .$$
Along this observation, a natural way to reach our objective, that is to catch (the asymptotic behaviour of) the mean spectral dynamics of the process $X^{(d)}$ is to study (the asymptotic behaviour of) the quantities 
\begin{equation}\label{joint-moments-x-d}
\vp_d\big( X^{(d)}_{t_1} \cdots X^{(d)}_{t_r}\big) \ ,
\end{equation}
for all possible $r\geq 1$ and $t_1,\ldots,t_r\geq 0$. For the same reasons, we will then be interested in the limit, as $d\to \infty$, of the \enquote{moments}
\begin{equation}\label{moments-integral-x-d}
\vp_d\Big( \Big( \int_0^1 P(X^{(d)}_u) dX^{(d)}_u Q(X^{(d)}_u) \Big)^r \Big) \quad , \quad r\geq 1 \ ,
\end{equation}
where the integral is defined through the considerations of the previous section.

\smallskip

Our description of the limits (for (\ref{joint-moments-x-d}) or for (\ref{moments-integral-x-d})) will again involve objects in some non-commutative probability space. To be more specific, we will rely on the following usual convergence interpretation:

\begin{definition}\label{defi:convergence}
Let $(\ca^{(n)},\vp^{(n)})$ be a sequence of non-commutative probability spaces, and let $U\in \ca$, where $(\ca,\vp)$ is another non-commutative probability space. A sequence of elements $U^{(n)} \in \ca^{(n)}$ is said to converge to $U$ (in the sense of non-commutative probability) if for every integer $r\geq 1$,
$$\vp^{(n)}\big( \big(U^{(n)}\big)^r\big) \stackrel{n\to \infty}{\longrightarrow} \vp\big( \big(U\big)^r\big) \ .$$
In a similar way, we say that a sequence of paths $\{Y^{(n)}_t\}_{t\geq 0}$ in $\ca^{(n)}$ converges (in the sense of non-commutative probability) to a path $\{Y_t\}_{t\geq 0}$ in $\ca$ if for every integer $r\geq 1$ and all times $t_1,\ldots,t_r \geq 0$,
$$\vp^{(n)}\big( Y^{(n)}_{t_1} \cdots Y^{(n)}_{t_r}\big) \stackrel{n\to\infty}{\longrightarrow} \vp\big( Y_{t_1} \cdots Y_{t_r}\big) \ .$$
\end{definition}

\

\subsection{An extension of Voiculescu's theorem}

\

\smallskip

Let us turn here to the presentation of the central combinatorial lemma that will serve us for the analysis of both (\ref{joint-moments-x-d}) and (\ref{moments-integral-x-d}). This result consists in fact in an easy extension of Voiculescu's fundamental theorem (\cite{voiculescu}) to a more general class of Gaussian matrices. For a clear statement, we need to introduce a few additional notations.


\smallskip

First, for every even integer $r\geq 1$, let us denote by $\mathcal{P}_2(r)$ the set of pairings of $\{1,\ldots,r\}$, i.e. the set of partitions of $\{1,\ldots ,r\}$ with blocks of two elements only, and by $NC_2(r)$ the subset of non-crossing pairings, i.e. the subset of pairings $\pi\in \mathcal{P}_2(r)$ for which there is no $1\leq p<q\leq r$ such that $\pi(p)> \pi(q)$.
Occasionally, we will identify a pairing $\pi \in \mathcal{P}_2(r)$ with a permutation of $\{1,\ldots,r\}$, by setting 
\begin{equation}\label{identif-pairing}
(\pi(p):=q,\pi(q):=p) \quad  \text{if and only if} \quad (p,q)\in \pi \ .
\end{equation} 

\smallskip

For every permutation $\si$ of $\{1,\ldots,r\}$, we denote by $\sharp(\si)$ the number of cycles in $\si$, and we recall that the genus of a pairing $\pi \in \mathcal{P}_2(r)$ (identified with a permutation along (\ref{identif-pairing})) is then defined by the formula
$$\text{genus}(\pi):=\frac12 \Big( \frac{r}{2}+1-\sharp(\ga \circ \pi) \Big) \ ,$$
where $\ga$ stands for the specific permutation of $\{1,\ldots,r\}$ given by $\ga:=(1 \, 2 \cdots r)$, i.e. $\ga(i)=i+1$ for $i=1,\ldots,r-1$ and $\ga(r)=1$. With this notation in hand, the three following properties, borrowed from \cite[Lecture 22]{nica-speicher}, turn out to be the keys toward Voiculescu's result:

\begin{lemma}\label{lem:voicu}
$(i)$ For all $r,d\geq 1$ and every permutation $\si$ of $\{1,\ldots,r\}$, it holds that
$$\sum_{i_1,\ldots,i_r=1}^d \1_{\{i_1=i_{\si(1)}\}}\cdots \1_{\{i_r=i_{\si(r)}\}} =d^{\sharp(\si)}\ .$$

\smallskip

\noindent
$(ii)$ For all even $r\geq 1$ and $\pi\in \mathcal{P}_2(r)$, one has $\text{genus}(\pi)\in \mathbb{N}$ and $0\leq \text{genus}(\pi)\leq \frac{r}{4}$. 

\smallskip

\noindent
$(iii)$ For all even $r\geq 1$ and $\pi\in \mathcal{P}_2(r)$, one has $\text{genus}(\pi)=0$ if and only if $\pi\in NC_2(r)$.
\end{lemma}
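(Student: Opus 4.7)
The plan is to address the three parts in order: (i) is an immediate book-keeping observation, (ii) is a combination of a parity argument and a trivial cycle-count bound, and (iii) is the main combinatorial step, best handled either topologically or by induction on $r$.

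For (i), the product $\prod_{k=1}^r \mathbf{1}_{\{i_k = i_{\sigma(k)}\}}$ is $1$ if and only if the map $k \mapsto i_k$ is constant on every orbit (cycle) of $\sigma$. There is therefore one free index per cycle, each with $d$ possible values, so the sum equals $d^{\sharp(\sigma)}$.

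For (ii), I would first establish that $r/2 + 1 - \sharp(\gamma\pi)$ is even, so that $\text{genus}(\pi)$ is an integer. Using signs of permutations on $r$ letters (with $r$ even): $\gamma$ is a single $r$-cycle so $\text{sgn}(\gamma) = (-1)^{r-1} = -1$, $\pi$ is a product of $r/2$ transpositions so $\text{sgn}(\pi) = (-1)^{r/2}$, and any $\tau$ satisfies $\text{sgn}(\tau) = (-1)^{r - \sharp(\tau)}$. Matching signs in $\gamma\pi$ gives $\sharp(\gamma\pi) \equiv r/2 + 1 \pmod 2$, which yields the integrality. The upper bound $\text{genus}(\pi) \leq r/4$ is merely $\sharp(\gamma\pi) \geq 1$, which is automatic. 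For the lower bound $\text{genus}(\pi) \geq 0$, equivalent to $\sharp(\gamma\pi) \leq r/2 + 1$, I would invoke the topological interpretation: view $\pi$ as a gluing rule for the edges of an $r$-gon, producing a closed orientable surface with $\sharp(\gamma\pi)$ vertices, $r/2$ edges, and $1$ face, so that the Euler characteristic $\sharp(\gamma\pi) - r/2 + 1$ is at most $2$.

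For (iii), the same topological picture gives the cleanest route: the glued surface is a sphere (i.e.\ genus $0$) precisely when the polygon identifications do not cross. If a purely combinatorial argument is preferred, I would induct on $r$. For the forward direction (non-crossing $\Rightarrow$ genus $0$), every non-crossing pairing $\pi$ contains an adjacent pair $(k,k+1)$; removing it produces a non-crossing pairing $\pi'$ on $r-2$ points, and a direct check shows that $k$ becomes a fixed point of $\gamma\pi$ while the rest of its cycle structure matches that of $\gamma'\pi'$ after relabelling, so $\sharp(\gamma\pi) = \sharp(\gamma'\pi') + 1$, and the induction closes. For the converse, I would show that any single \emph{uncrossing} move on a crossing pair $(a,b),(c,d)$ with $a<c<b<d$, replacing them by $(a,c),(b,d)$, strictly increases $\sharp(\gamma\pi)$ by exactly $2$; iterating turns $\pi$ into a non-crossing pairing with strictly larger cycle count than $\pi$, hence $\sharp(\gamma\pi) < r/2 + 1$ and $\text{genus}(\pi) > 0$. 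The main obstacle is verifying this $+2$ jump under uncrossing, which reduces to a careful case analysis of how $a,b,c,d$ sit inside the cycles of $\gamma\pi$; the topological viewpoint short-circuits this bookkeeping by interpreting the uncrossing as the surgery that decreases the genus of the associated surface by $1$.
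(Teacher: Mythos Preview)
The paper does not actually prove this lemma: it states the three properties and attributes them to \cite[Lecture 22]{nica-speicher} without further argument. Your outline is therefore more than the paper supplies, and it follows essentially the standard route taken in that reference (the polygon-gluing / genus interpretation, combined with elementary cycle-counting and the sign argument for integrality).

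A couple of minor remarks on your sketch. For the lower bound in (ii), you do not need the topological machinery: since $\pi$ is a product of $r/2$ transpositions and multiplying any permutation by a single transposition changes its number of cycles by exactly $\pm 1$, one gets $\sharp(\gamma\pi)\leq \sharp(\gamma)+r/2=1+r/2$ directly. For the converse in (iii), your ``uncrossing increases $\sharp(\gamma\pi)$ by exactly $2$'' is the right heuristic but, as you note, the purely combinatorial verification is fiddly; the cleaner combinatorial alternative (and the one closer in spirit to Nica--Speicher) is to argue via the geodesic condition in the Cayley graph of $S_r$: with the transposition metric $d(e,\sigma)=r-\sharp(\sigma)$, the inequality $\sharp(\gamma\pi)\leq r/2+1$ is exactly the triangle inequality $d(e,\pi)+d(\pi,\gamma)\geq d(e,\gamma)$, and equality (genus $0$) means $\pi$ lies on a geodesic from $e$ to $\gamma$, which is the well-known characterisation of non-crossing pairings. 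Either way, your proposal is sound.
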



The desired central lemma now reads as follows (recall that, for the whole Section \ref{sec:asympt-results}, the notation $\vp_d$ refers to the trace on $\mathcal{M}_d(L^{\infty-}(\Omega))$ defined by (\ref{defi:trace-vp-d})):

\begin{lemma}\label{lem:gene}
Fix $d\geq 1$, as well as an arbitrary time-index set $I$, and consider, on a classical probability space $(\Omega,\mathcal{F},\mathbb{P})$, a Gaussian family $\{M_t(i,j)\}_{t\in I,1\leq i,j\leq d}$
of random variables with covariance of the form
\begin{equation}\label{cond-cova-matrix}
\mathbb{E}\big[ M_s(i,j)M_t(k,\ell) \big]=\frac{1}{d} c_M(s,t) \1_{i=\ell} \1_{j=k} \ ,
\end{equation}
for some time-covariance function $c_M:I^2 \to \R$. Then for all $r\geq 1$ and $t_1,\ldots,t_r\in I$, it holds that
\begin{equation}
\vp_d\big( M_{t_1} \cdots M_{t_r}\big)=\sum_{g=0}^{r/4} d^{-2g}\sum_{\substack{\pi \in \mathcal{P}_2(r)\\ \text{genus}(\pi)=g}} \prod_{(p,q)\in \pi} c_M(t_p,t_q)  \ .
\end{equation}
\end{lemma}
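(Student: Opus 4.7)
The statement is a standard genus-expansion identity; I would follow the proof of Voiculescu's theorem as laid out in \cite[Lecture 22]{nica-speicher}, the only modification being that the covariance (\ref{cond-cova-matrix}) carries an arbitrary time-dependence $c_M(t_p,t_q)$ instead of a constant. The time labels affect only a scalar factor and do not interact with the combinatorics.

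The first step is to unfold the normalized trace and apply Wick's formula:
\begin{equation*}
\vp_d\big(M_{t_1}\cdots M_{t_r}\big) = \frac{1}{d}\sum_{i_1,\ldots,i_r=1}^d \mathbb{E}\big[M_{t_1}(i_1,i_2)\cdots M_{t_r}(i_r,i_1)\big],
\end{equation*}
with the cyclic convention $i_{r+1}:=i_1$. For $r$ odd the expectation vanishes. For $r$ even, Wick's theorem for the centered (jointly) Gaussian family expresses the expectation as a sum over pairings $\pi\in\mathcal{P}_2(r)$ of a product of two-point functions, each of which is evaluated by (\ref{cond-cova-matrix}): a pair $(p,q)\in\pi$ contributes a scalar factor $d^{-1} c_M(t_p,t_q)$ together with two Kronecker constraints $\1_{i_p=i_{q+1}}\1_{i_{p+1}=i_q}$. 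Multiplying over the $r/2$ pairs factors out $d^{-r/2}\prod_{(p,q)\in\pi} c_M(t_p,t_q)$, leaving only the task of counting the tuples $(i_1,\ldots,i_r)\in\{1,\ldots,d\}^r$ that satisfy the full collection of indicator constraints.

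The combinatorial heart of the argument lies in this index counting. Identifying $\pi$ with the involution (\ref{identif-pairing}), I would rewrite the constraint $i_{p+1}=i_{q}=i_{\pi(p)}$ attached to a pair $(p,q)$ in the form $i_{\ga(p)}=i_{\pi(p)}$, with $\ga$ the cyclic shift of Lemma \ref{lem:voicu}; reparametrising by $p\mapsto\pi(p)$ then yields the equivalent cyclic form $i_p = i_{(\ga\circ\pi)(p)}$, valid for every $p\in\{1,\ldots,r\}$. Admissible tuples are therefore exactly the $r$-tuples that are constant on the cycles of $\ga\circ\pi$, and Lemma \ref{lem:voicu}(i) yields exactly $d^{\sharp(\ga\circ\pi)}$ of them. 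This index bookkeeping is the one place calling for some care; the rest of the proof is mechanical.

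Putting everything together,
\begin{equation*}
\vp_d\big(M_{t_1}\cdots M_{t_r}\big) = \sum_{\pi\in\mathcal{P}_2(r)} d^{\sharp(\ga\circ\pi)-r/2-1}\prod_{(p,q)\in\pi}c_M(t_p,t_q).
\end{equation*}
The defining identity $\sharp(\ga\circ\pi) = r/2+1-2\,\text{genus}(\pi)$ converts the exponent of $d$ into $-2\,\text{genus}(\pi)$, and Lemma \ref{lem:voicu}(ii), which guarantees $\text{genus}(\pi)\in\{0,1,\ldots,\lfloor r/4\rfloor\}$, allows a regrouping by $g=\text{genus}(\pi)$ into the announced formula.
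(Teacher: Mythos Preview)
Your proof is correct and follows essentially the same route as the paper's own argument: expand the trace, apply Wick's formula with the covariance (\ref{cond-cova-matrix}), recast the index constraints as $i_p=i_{(\ga\circ\pi)(p)}$, count via Lemma \ref{lem:voicu}(i), and regroup by genus using Lemma \ref{lem:voicu}(ii). The only cosmetic difference is that the paper keeps both indicators $\1_{\{i_p=i_{(\ga\circ\pi)(p)}\}}\1_{\{i_q=i_{(\ga\circ\pi)(q)}\}}$ explicit before collapsing them into the full product over $\{1,\ldots,r\}$, whereas you reach the same conclusion via the reparametrisation $p\mapsto\pi(p)$.
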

\begin{proof}
As we evoked it earlier, the argument is a mere adaptation of the proof of Voiculescu's fundamental result. To be more specific, we will follow the lines of the proof of \cite[Theorem 22.24]{nica-speicher}. Let us first expand the quantity under consideration using the standard Wick formula for Gaussian variables, which, combined with (\ref{cond-cova-matrix}), yields 
\begin{align*}
\vp_d\big( M_{t_1} \cdots  M_{t_r}\big)&=\frac{1}{d} \sum_{i_1,\ldots,i_r=1}^d \mathbb{E} \big[ M_{t_1}(i_1,i_2) M_{t_2}(i_2,i_3) \cdots M_{t_r}(i_r,i_1) \big]\\
&= \frac{1}{d} \sum_{i_1,\ldots,i_r=1}^d \sum_{\pi \in \mathcal{P}_2(r)} \prod_{(p,q)\in \pi} \mathbb{E} \big[ M_{t_p}(i_p,i_{p+1}) M_{t_q}(i_q,i_{q+1}) \big]\\
&=\sum_{\pi \in \mathcal{P}_2(r)}  \Big( \frac{1}{d^{1+\frac{r}{2}}} \sum_{i_1,\ldots,i_r=1}^d \prod_{(p,q)\in \pi} \1_{\{i_p=i_{q+1}\}} \1_{\{i_{p+1}=i_q\}}\Big)\Big(\prod_{(p,q)\in \pi} c_H(t_p,t_q) \Big) \ ,
\end{align*}
where we have used the convention $i_{r+1}:=i_1$. Identifying pairings with permutations along (\ref{identif-pairing}) and using the specific permutation $\ga:=(1\, 2 \ldots r)$, we can easily rewrite the previous quantity as 
\begin{align*}
\vp_d\big( M_{t_1} \cdots  M_{t_r}\big)&=\sum_{\pi \in \mathcal{P}_2(r)} \Big( \frac{1}{d^{1+\frac{r}{2}}} \sum_{i_1,\ldots,i_r=1}^d \prod_{(p,q)\in \pi} \1_{\{i_p=i_{(\ga \circ \pi)(p)}\}} \1_{\{i_{(\ga \circ \pi)(q)}=i_q\}} \Big) \Big(\prod_{(p,q)\in \pi} c_H(t_p,t_q) \Big) \\
&=\sum_{\pi \in \mathcal{P}_2(r)} \Big( \frac{1}{d^{1+\frac{r}{2}}} \sum_{i_1,\ldots,i_r=1}^d \1_{\{i_1=i_{(\ga\circ \pi)(1)}\}}\cdots \1_{\{i_r=i_{(\ga\circ \pi)(r)}\}} \Big) \Big(\prod_{(p,q)\in \pi} c_H(t_p,t_q) \Big) \ .
\end{align*}
Finally, we can successively lean on the results of items $(i)$ and $(ii)$ in Lemma \ref{lem:voicu} to assert that
\begin{align*}
\vp_d\big( M_{t_1} \cdots  M_{t_r}\big)
&= \sum_{\pi \in \mathcal{P}_2(r)} d^{-2\, \text{genus}(\pi)} \Big(\prod_{(p,q)\in \pi} c_H(t_p,t_q) \Big)\\
&=\sum_{g=0}^{r/4} d^{-2g}\sum_{\substack{\pi \in \mathcal{P}_2(r)\\ \text{genus}(\pi)=g}}  \Big(\prod_{(p,q)\in \pi} c_H(t_p,t_q) \Big) \ .
\end{align*}
\end{proof}

The covariance of the Gaussian family $\{X^{(d)}_t(i,j)\}_{1\leq i,j\leq d,t\geq 0}$ generated by the HfBm is precisely of the form (\ref{cond-cova-matrix}). To be more specific, it is readily checked that for all $s,t\geq 0$ and $1\leq i,j,k,\ell\leq d$, one has
\begin{equation}
\mathbb{E}\big[ X^{(d)}_s(i,j)X^{(d)}_t(k,\ell) \big]=\frac{1}{d} c_H(s,t) \1_{i=\ell} \1_{j=k} \ ,
\end{equation}
where $c_H$ refers to the classical fractional covariance of index $H$, that is 
\begin{equation}\label{cova-NC-fBm}
c_H(s,t):=\frac12 \big\{s^{2H}+t^{2H}-|t-s|^{2H}\big\} \ .
\end{equation}
We are thus in a position to apply Lemma \ref{lem:gene} and assert that for all $r\geq 1$, $t_1,\ldots,t_r\geq 0$, 
\begin{equation}\label{moments-x-d-descrip}
\vp_d\big( X^{(d)}_{t_1} \cdots X^{(d)}_{t_r}\big)=\sum_{g=0}^{r/4} d^{-2g}\sum_{\substack{\pi \in \mathcal{P}_2(r)\\ \text{genus}(\pi)=g}} \prod_{(p,q)\in \pi} c_H(t_p,t_q)  \ .
\end{equation}
The identification of the limit of $X^{(d)}$ (as $d$ goes to infinity) is now straightforward. Appealing indeed to the result of item $(iii)$ in Lemma \ref{lem:voicu}, we are naturally led to the consideration of the \textit{non-commutative fractional Brownian motion}:

\begin{definition}\label{defi:nc-bm}
In a NC-probability space $(\ca^{(\infty)},\vp_\infty)$, and for every $H\in (0,1)$, we call a non-commutative fractional Brownian motion (NC-fBm) of Hurst index $H$ any collection $\{X^{(\infty)}_t\}_{t\geq 0}$ of self-adjoint elements in $\ca^{(\infty)}$ such that, for every even integer $r\geq $1 and all $t_1,\ldots,t_r \geq 0$, one has
\begin{equation}\label{form-wick}
\vp_\infty\big( X^{(\infty)}_{t_1}\cdots X^{(\infty)}_{t_r}\big)=\sum_{\pi \in NC_2(r)} \prod_{\{p,q\}\in \pi}c_H(t_p,t_q)\ ,
\end{equation}
and $\vp_\infty\big( X^{(\infty)}_{t_1}\cdots X^{(\infty)}_{t_r}\big)=0$ whenever $r$ is an odd integer.
\end{definition}

For every fixed $H\in (0,1)$, the existence of such a NC-fBm (living in some non-commutative probability space) is guaranteed by the general results of \cite{q-gauss}. Letting $d$ tend to infinity in (\ref{moments-x-d-descrip}), we immediately get, thanks to Lemma \ref{lem:voicu}, item $(iii)$:
\begin{proposition}\label{prop:convergence-result}
For every $H\in (0,1)$, and as $d\to \infty$,  $X^{(d)}$ converges, in the sense of non-commutative probability, to a NC-fBm of same Hurst index $H$. 
\end{proposition}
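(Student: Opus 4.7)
The plan is to read the conclusion directly off the explicit moment formula (\ref{moments-x-d-descrip}) by passing to the limit $d \to \infty$, using Lemma \ref{lem:voicu}(iii) to identify the surviving terms with those defining the NC-fBm in Definition \ref{defi:nc-bm}. Since convergence in the sense of non-commutative probability means convergence of all joint moments (Definition \ref{defi:convergence}), the task reduces to establishing, for every $r \geq 1$ and all $t_1,\ldots,t_r \geq 0$, that $\vp_d(X^{(d)}_{t_1} \cdots X^{(d)}_{t_r})$ tends to $\vp_\infty(X^{(\infty)}_{t_1} \cdots X^{(\infty)}_{t_r})$.

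I would dispatch the odd case first: when $r$ is odd there is no pairing of $\{1,\ldots,r\}$, so the right-hand side of (\ref{moments-x-d-descrip}) is empty and $\vp_d(X^{(d)}_{t_1} \cdots X^{(d)}_{t_r}) = 0$ for every $d$, matching the odd-moment vanishing prescribed by Definition \ref{defi:nc-bm}. For even $r$, the natural move is to split the sum in (\ref{moments-x-d-descrip}) according to genus. The $g = 0$ contribution is independent of $d$, while each term with $g \geq 1$ is bounded by $d^{-2g}$ times the finite, $d$-independent quantity $\sum_{\pi} \prod_{(p,q) \in \pi} |c_H(t_p,t_q)|$. Consequently, only the genus-zero stratum survives in the limit, and by Lemma \ref{lem:voicu}(iii) this stratum is precisely $NC_2(r)$. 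The limiting moment thus equals
$$\sum_{\pi \in NC_2(r)} \prod_{(p,q) \in \pi} c_H(t_p,t_q),$$
which is exactly the Wick-type formula (\ref{form-wick}) defining a NC-fBm of Hurst index $H$.

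There is essentially no obstacle to the proof once the preceding lemmas are available: Lemma \ref{lem:gene} produces the finite-$d$ genus expansion, Lemma \ref{lem:voicu}(iii) identifies the genus-zero terms with the non-crossing pairings, and the proposition follows from the trivial observation that a finite sum of contributions of size $d^{-2g}$ with $g \geq 1$ vanishes as $d \to \infty$. Hence the bulk of the work has been front-loaded into Section 3.1, and only this short argument remains.
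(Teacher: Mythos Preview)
Your proposal is correct and follows exactly the approach the paper itself indicates: the proposition is stated as an immediate consequence of letting $d\to\infty$ in (\ref{moments-x-d-descrip}) and invoking Lemma \ref{lem:voicu}(iii), which is precisely what you do. Your explicit handling of the odd-$r$ case and the $d^{-2g}$ bound is slightly more detailed than the paper's one-line justification, but the underlying argument is identical.
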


\

Our objective in the sequel is to show that, at least when $H\geq \frac12$, this convergence result can be extended at the level of the integral driven by $X^{(d)}$, as defined in (\ref{int-canon-x-d}). The limit will naturally involve some integral driven by a NC-fBm $X^{(\infty)}$, that we will be able to interpret thanks to the results of \cite{deya-schott-3}.

\subsection{Convergence of the integral in the Young case: $H>\frac12$}

\

\smallskip

The aim here is to establish the following (expected) result:

\begin{proposition}\label{prop:conv-integr}
When $H>\frac12$, and for all polynomials $P,Q$, one has, in the sense of non-commutative probability,
\begin{equation}\label{conv-young}
\int_0^1 P\big( X^{(d)}_u\big) \mathrm{d}X^{(d)}_u Q\big( X^{(d)}_u\big)\stackrel{d\to \infty}{\longrightarrow} \int_0^1 P\big( X^{(\infty)}_u\big) \mathrm{d}X^{(\infty)}_u Q\big( X^{(\infty)}_u\big) \ ,
\end{equation}
where the integral in the left-hand side is interpreted via (\ref{int-canon-x-d}) and the integral in the limit is interpreted through \cite[Proposition 2.5]{deya-schott-3}.
\end{proposition}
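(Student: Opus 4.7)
The strategy is a double approximation of each side by Riemann sums, taking advantage of the Young regime $H>\frac{1}{2}$. For a partition $D=\{0=t_0<\cdots<t_n=1\}$ of $[0,1]$ introduce the Riemann sum
$$S_D^{(d)} := \sum_{t_i\in D} P(X^{(d)}_{t_i})\,\der X^{(d)}_{t_it_{i+1}}\,Q(X^{(d)}_{t_i}),$$
and the analogous quantity $S_D^{(\infty)}$ built from the NC-fBm; denote by $I^{(d)}$ and $I^{(\infty)}$ the two integrals in (\ref{conv-young}). Fix $r\ge 1$; the plan is to prove $\vp_d((I^{(d)})^r)\to\vp_\infty((I^{(\infty)})^r)$ by splitting the difference as
$|\vp_d((I^{(d)})^r)-\vp_d((S_D^{(d)})^r)|+|\vp_d((S_D^{(d)})^r)-\vp_\infty((S_D^{(\infty)})^r)|+|\vp_\infty((S_D^{(\infty)})^r)-\vp_\infty((I^{(\infty)})^r)|$
and sending first $|D|\to 0$ uniformly in $d$, and then $d\to\infty$.

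For the middle cross-term, the key point is that, for any fixed partition $D$, the element $(S_D^{(d)})^r$ is a prescribed non-commutative polynomial in the finite family $\{X^{(d)}_{t_i}\}_{t_i\in D}$. Expanding and applying Proposition \ref{prop:convergence-result} in the joint-moment formulation of Definition \ref{defi:convergence} monomial by monomial immediately yields $\vp_d((S_D^{(d)})^r)\to\vp_\infty((S_D^{(\infty)})^r)$ as $d\to\infty$.

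The third piece tends to $0$ with $|D|\to 0$ by the very construction of the NC integral in \cite[Proposition 2.5]{deya-schott-3}. The first piece is the delicate one: for each fixed $d$, the rough integral (\ref{int-canon-x-d}) reduces to a Young integral when $H>\frac{1}{2}$ (the L\'evy-area contributions in (\ref{corrected-riemmann-sums}) are of order $|D|^{2\ga-1}\to 0$), hence $S_D^{(d)}\to I^{(d)}$ in the componentwise sense of Lemma \ref{lem:consistency}; but to close the triangle inequality, this convergence must be \emph{uniform in $d$} at the level of $\vp_d$-moments. I would obtain such uniformity by estimating $\vp_d((S_D^{(d)}-S_{D'}^{(d)})^{2r})$ for refinements $D\subset D'$ directly through the Wick expansion underlying Lemma \ref{lem:gene}: the contributing pairings organise into a genus-zero part---bounded by a positive power of $|D|$ through the same Young-type argument as on the NC side, using $2H>1$---and higher-genus corrections dominated by $d^{-2}$ times expressions of identical shape, hence uniformly negligible in $d$.

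The main obstacle is precisely this uniform-in-$d$ Cauchy estimate. A pathwise Young error bound would involve random H\"older seminorms of $X^{(d)}$ whose $L^p$-moments could a priori degenerate with $d$; the way around it is to bypass such pathwise bounds and work directly at the level of $\vp_d$-moments via the Gaussian Wick formula, exploiting the tensor covariance structure (\ref{cond-cova-matrix}) together with the genus bookkeeping of Lemma \ref{lem:gene} to isolate a $d$-independent leading contribution from $d^{-2}$-suppressed corrections. Once this uniform estimate is in hand, the triangle inequality closes by first choosing $|D|$ small and then letting $d\to\infty$.
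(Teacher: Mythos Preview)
Your overall plan---Riemann-sum approximation, the polynomial convergence of Proposition~\ref{prop:convergence-result} for the middle term, and a limit interchange---is exactly the paper's. The gap lies in your uniform estimate, where the one property specific to $H>\tfrac12$ that makes the argument work is not invoked.

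The paper reverses the roles in the interchange: rather than your $\sup_d |\vp_d((I^{(d)})^r)-\vp_d((S_D^{(d)})^r)|\to 0$ as $|D|\to 0$, it proves
\[
\sup_{n}\Big|\vp_d\big((S_n^{(d)})^r\big)-\vp_\infty\big((S_n^{(\infty)})^r\big)\Big|\xrightarrow{d\to\infty}0 .
\]
Applying Lemma~\ref{lem:gene} directly to $(S_n^{(d)})^r$ (no Cauchy difference) gives $\vp_d-\vp_\infty=\sum_{g\ge 1}d^{-2g}\sum_{\text{genus}(\pi)=g}C_\pi$. The decisive observation---absent from your sketch---is that for $H>\tfrac12$ the fBm increments are \emph{positively correlated}, so every $C_\pi\ge 0$. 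One can then bound crudely $\big|\sum_{\text{genus}=g}C_\pi\big|\le \sum_{\pi}C_\pi$, and the full-pairing sum equals $\mathbb{E}\big[(\sum_i |P|(x_{t_i})\,\der x_{t_it_{i+1}}\,|Q|(x_{t_i}))^r\big]$ for a \emph{one-dimensional} fBm $x$, which is bounded uniformly in $n$ by classical Young estimates. This yields $|\vp_d-\vp_\infty|=O(d^{-2})$ uniformly in $n$, and the limits switch.

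Your Cauchy-difference formulation $(S_D^{(d)}-S_{D'}^{(d)})^{2r}$ obstructs this positivity trick: the alternating signs from the subtraction mean that, after taking absolute values, the genus-$g$ piece is dominated only by the one-dimensional moment of an expression like $(|S_D|+|S_{D'}|)^{2r}$, which is bounded but does \emph{not} vanish with $|D|$. Hence your claim that higher-genus terms are ``$d^{-2}$ times expressions of identical shape, hence uniformly negligible in $d$'' does not close the argument: $d^{-2}$ gives nothing for small $d$, and you provide no reason why the genus-$g$ coefficient itself tends to $0$ with $|D|$. The paper's organization sidesteps this by needing only a uniform-in-$n$ \emph{bound} on the positive quantity $\sum_\pi C_\pi$, which comes for free from the one-dimensional Young theory.
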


Our strategy toward (\ref{conv-young}) consists in trying to reduce the problem to the polynomial convergence of Proposition \ref{prop:convergence-result} (that is, the convergence of all joint finite \enquote{moments} of $X^{(d)}$, in the sense of Definition \ref{defi:convergence}). We will thus rely on a polynomial approximation of the integrals in (\ref{conv-young}):

\

\begin{lemma}
Assume that $H> \frac12$. Then for all fixed $1\leq d\leq \infty$, $1\leq r <\infty$ and all polynomials $P,Q$, it holds that
\begin{equation}\label{fixed-d-1}
\vp_d\Big( \Big( \int_0^1 P\big( X^{(d)}_u\big) \mathrm{d}X^{(d)}_u Q\big( X^{(d)}_u\big)\Big)^r\Big)=\lim_{n\to \infty} \vp_d\Big( \Big( \sum_{i=0}^{2^n-1}P\big( X^{(d)}_{t_i^n}\big) \der X^{(d)}_{t_i^n t_{i+1}^n} Q\big( X^{(d)}_{t_i^n}\big)\Big)^r\Big) \ ,
\end{equation}
where $t_i^n:=\frac{i}{2^n}$ for $i=0,\ldots,2^n$.
\end{lemma}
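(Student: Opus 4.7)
The plan is to handle the cases $d<\infty$ and $d=\infty$ separately, and in each to upgrade a Riemann-sum approximation of the integral to the convergence of its $r$-th trace moment under $\vp_d$. For finite $d$, I would start by fixing an exponent $\ga\in(\frac12,H)$ (possible since $H>\frac12$), so that $2\ga-1>0$. With this choice the construction of $I^{(d)}:=\int_0^1 P(X^{(d)}_u)\, \mathrm{d}X^{(d)}_u\, Q(X^{(d)}_u)$ reduces to a pure Young integration: the product L�vy-area correction terms that appear in (\ref{corrected-riemmann-sums}) contribute a vanishing total of order $2^{-n(2\ga-1)}$, and the sewing argument underlying Proposition \ref{prop-int-gen} yields a pathwise bound of the form
$$\big\|S_n^{(d)}-I^{(d)}\big\| \leq C \, 2^{-n(2\ga-1)} \, \cn[P(X^{(d)});\cac_1^\ga([0,1];\ca^{(d)})]\, \cn[Q(X^{(d)});\cac_1^\ga([0,1];\ca^{(d)})]\, \cn[X^{(d)};\cac_1^\ga([0,1];\ca^{(d)})],$$
where $S_n^{(d)}$ denotes the dyadic Riemann sum on the right-hand side of (\ref{fixed-d-1}). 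In particular $S_n^{(d)}\to I^{(d)}$ almost surely.

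To promote the a.s.\ convergence to the convergence of $\vp_d((S_n^{(d)})^r)=\frac1d\mathbb{E}[\text{Tr}_d((S_n^{(d)})^r)]$ I would rely on uniform integrability. Since the entries of $X^{(d)}$ are Gaussian and $P,Q$ are polynomials, each H�lder seminorm in the estimate above has finite moments of every order (by Fernique's theorem combined with Gaussian hypercontractivity for polynomials in $X^{(d)}$), so $\{\|S_n^{(d)}\|\}_n$ is bounded in $L^q(\Omega)$ for every $q\geq 1$; consequently $\{(S_n^{(d)})^r\}_n$ is uniformly integrable, and Vitali's theorem gives $\vp_d((S_n^{(d)})^r)\to \vp_d((I^{(d)})^r)$. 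For $d=\infty$, the integral $I^{(\infty)}$ is defined in \cite[Proposition 2.5]{deya-schott-3} through an analogous Young-type procedure in the non-commutative setting, producing $I^{(\infty)}$ as a limit of the corresponding Riemann sums $S_n^{(\infty)}$ in $L^p(\vp_\infty)$ for every $p\geq 1$ (the sewing bound transfers verbatim with tracial $L^p$-norms replacing pathwise norms, once finiteness of the tracial H�lder moments of $X^{(\infty)}$ is checked). Since $Z\mapsto \vp_\infty(Z^r)$ is continuous on bounded subsets of $L^r(\vp_\infty)$, the right-hand side of (\ref{fixed-d-1}) with $d=\infty$ converges to the left-hand side.

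The core technical point is the uniform-in-$n$ $L^q$ control of $\|S_n^{(d)}\|$ in the finite-$d$ case; I do not anticipate a real difficulty since all required ingredients (Young sewing, Fernique's theorem, Gaussian polynomial estimates) are classical, but the bookkeeping around the three H�lder seminorms on $P(X^{(d)})$, $Q(X^{(d)})$ and $X^{(d)}$ must be carried out carefully. For $d=\infty$ the remaining task is to verify that the construction of \cite[Proposition 2.5]{deya-schott-3} indeed provides an approximation strong enough to preserve $r$-th moments (and not only second moments), which should be immediate from the $L^p(\vp_\infty)$-strength of the sewing argument used there.
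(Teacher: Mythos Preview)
Your proposal is correct and follows essentially the same route as the paper: for finite $d$ you exploit the Young regime ($\ga\in(\tfrac12,H)$) to see that the L{\'e}vy-area corrections in the sewing expansion vanish and then upgrade the pathwise/a.s.\ convergence of the Riemann sums to convergence of moments via uniform $L^q$-bounds on Gaussian functionals, while for $d=\infty$ you invoke \cite[Proposition~2.5]{deya-schott-3}. The only cosmetic difference is that the paper first passes through the corrected Riemann sums (via Remark~\ref{rk:convergence-l-p}) and uses the explicit telescoping bound~(\ref{link-classical-norm}) in place of your Vitali/Fernique packaging, but the substance is identical.
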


\begin{proof}
For $1\leq d< \infty$, observe first that for all $A,B\in \mathcal{M}_d(L^{\infty-}(\Omega))$,
\begin{equation}\label{link-classical-norm}
\big|\vp_d\big(A^r-B^r\big) \big|\leq \frac{1}{d}\mathbb{E}\big[ \|A-B\|^2\big]^{\frac12} \sum_{m=0}^{r-1} \mathbb{E}\big[ \|A\|^{4m}\big]^{\frac14}\mathbb{E}\big[ \|B\|^{4(r-1-m)}\big]^{\frac14} \ ,
\end{equation}
which, by Proposition \ref{prop-int-gen} and Remark \ref{rk:convergence-l-p}, entails that
\small
\begin{align}
&\vp_d\Big( \Big( \int_0^1 P\big( X^{(d)}_u\big) \mathrm{d}X^{(d)}_u Q\big( X^{(d)}_u\big)\Big)^r\Big)\nonumber\\
&=\lim_{n\to \infty} \vp_d\Big( \Big( \sum_{i=0}^{2^n-1}\Big\{  P\big( X^{(d)}_{t_i^n}\big) \der X^{(d)}_{t_i^n t_{i+1}^n} Q\big( X^{(d)}_{t_i^n}\big)+[\mathbf{X}^{(d)}_{t^n_it^n_{i+1}}\times \id](\mathcal{U}^{X,1}_{t^n_i})+[\id \times \mathbf{X}^{(d),\ast}_{t^n_it^n_{i+1}}](\mathcal{U}^{X,2}_{t^n_i})\Big\}\Big)^r\Big)\ ,\label{sum-correc}
\end{align}
\normalsize
with $\mathcal{U}^{X,1}_s := \partial P(X^{(d)}_s)  \otimes Q(X^{(d)}_s)$ and $\mathcal{U}^{X,2}_s:=P( X^{(d)}_s) \otimes \partial Q(X^{(d)}_s)$. Now we can lean on standard estimates for the canonical Lévy areas (\ref{1d-levy-areas}) behind $\mathbf{X}^{(d)}$ to assert that for all $0< \ka <H$ and $0\leq s\leq t\leq 1$, 
$$\big| [\mathbf{X}^{(d)}_{st}\times \id](\mathcal{U}^{X,1}_{s}) \big| \leq C_{d,\ka} |t-s|^{2\ka} \quad \text{and} \quad \big|[\id \times \mathbf{X}^{(d),\ast}_{st}](\mathcal{U}^{X,2}_{s})\big| \leq C_{d,\ka} |t-s|^{2\ka} \ , $$
for some random variable $C_{d,\ka}$ admitting finite moments of any order. Picking $\ka\in (\frac12,H)$ and using again (\ref{link-classical-norm}), we can conclude that the limit of the sum in (\ref{sum-correc}) reduces in fact to the limit of the Riemann sum in (\ref{fixed-d-1}), as desired.

\smallskip

For $d=\infty$, the convergence is a straightforward consequence of \cite[Proposition 2.5]{deya-schott-3}.
\end{proof}

\begin{proof}[Proof of Proposition \ref{prop:conv-integr}]
Based on Proposition \ref{prop:convergence-result} and approximation (\ref{fixed-d-1}) (which holds for both finite $d$ and for $d=\infty$), the problem reduces to justifying some limit interchange, and to this end, we are going to show that
\small 
\begin{equation}\label{unifo-conv}
\sup_{n\geq 0}\, \bigg| \vp_d\Big( \Big( \sum_{i=0}^{2^n-1}P\big( X^{(d)}_{t_i^n}\big) \der X^{(d)}_{t_i^n t_{i+1}^n} Q\big( X^{(d)}_{t_i^n}\big)\Big)^r\Big)- \vp_\infty\Big( \Big( \sum_{i=0}^{2^n-1}P\big( X_{t_i^n}\big) \der X^{(\infty)}_{t_i^n t_{i+1}^n} Q\big( X_{t_i^n}\big)\Big)^r\Big)\bigg| \stackrel{d\to \infty}{\longrightarrow} 0 \ .
\end{equation}
\normalsize
By Proposition \ref{prop:convergence-result}, this convergence is known to be true for every fixed $n\geq 0$. For a uniform result, let us first write, if $P(X)=\sum_{p\geq 0} a_p X^p$ and $Q(X)=\sum_{q\geq 0} b_q X^q$, 
\begin{align}
&\vp_d\Big( \Big( \sum_{i=0}^{2^n-1}P\big( X^{(d)}_{t_i^n}\big) \der X^{(d)}_{t_i^n t_{i+1}^n} Q\big( X^{(d)}_{t_i^n}\big)\Big)^r\Big)=\sum_{i_1,\ldots,i_r=0}^{2^n-1} \sum_{p_1,\ldots,p_r \geq 0} \sum_{q_1,\ldots,q_r\geq 0} a_{p_1} \cdots a_{p_r} b_{q_1} \cdots b_{q_r}\nonumber\\
&\hspace{4cm} \vp_d \Big( \big\{ \big( X^{(d)}_{t_{i_1}}\big)^{p_1} \der X^{(d)}_{t_{i_1}t_{i_1+1}}\big( X^{(d)}_{t_{i_1}}\big)^{q_1} \big\} \cdots \big\{ \big( X^{(d)}_{t_{i_r}}\big)^{p_r} \der X^{(d)}_{t_{i_r}t_{i_r+1}}\big( X^{(d)}_{t_{i_r}}\big)^{q_r} \big\} \Big) \ .\label{first-expansion}
\end{align}
Then observe that the covariance of the Gaussian family
\begin{equation}\label{gauss-fam-proof}
\big\{ X^{(d)}_{t_{i_1}}(k,\ell), \der X^{(d)}_{t_{i_1}t_{i_1+1}}(k,\ell),\ldots,X^{(d)}_{t_{i_r}}(k,\ell), \der X^{(d)}_{t_{i_r}t_{i_r+1}}(k,\ell)\big\}_{1\leq k,\ell\leq d}
\end{equation}
involved in (\ref{first-expansion}) can clearly be written as in (\ref{cond-cova-matrix}), for some suitable time-covariance function $c$, and therefore we can apply Lemma \ref{lem:gene} to deduce that, if $R:=(p_1+q_1+1)+\ldots+(p_r+q_r+1)$,
\begin{equation}\label{lhs}
\vp_d \Big( \big\{ \big( X^{(d)}_{t_{i_1}}\big)^{p_1} \der X^{(d)}_{t_{i_1}t_{i_1+1}}\big( X^{(d)}_{t_{i_1}}\big)^{q_1} \big\} \cdots \big\{ \big( X^{(d)}_{t_{i_r}}\big)^{p_r} \der X^{(d)}_{t_{i_r}t_{i_r+1}}\big( X^{(d)}_{t_{i_r}}\big)^{q_r} \big\} \Big) =\sum_{g=0}^{ R/4} d^{-2g} \sum_{\substack{\pi \in \mathcal{P}_2(R)\\\text{genus}(\pi)=g}} C_\pi \ ,
\end{equation}
where $C_\pi=C_\pi((p_1,\ldots,p_r),(q_1,\ldots,q_r),(t_{i_1},\ldots,t_{i_r}))$ is naturally obtained as the product (along the pairs in $\pi$) of the time-covariances associated with the product of variables in the left-hand side.

\smallskip

Noting that the time-covariance function $c$ of the family in (\ref{gauss-fam-proof}) is in fact nothing but the covariance of the family
\begin{equation*}
\big\{ X^{(\infty)}_{t_{i_1}}, \der X^{(\infty)}_{t_{i_1}t_{i_1+1}},\ldots,X^{(\infty)}_{t_{i_r}}, \der X^{(\infty)}_{t_{i_r}t_{i_r+1}}\big\} \ ,
\end{equation*}
we can write
\small
\begin{align*}
&\vp_d \Big( \big\{ \big( X^{(d)}_{t_{i_1}}\big)^{p_1} \der X^{(d)}_{t_{i_1}t_{i_1+1}}\big( X^{(d)}_{t_{i_1}}\big)^{q_1} \big\} \cdots \big\{ \big( X^{(d)}_{t_{i_r}}\big)^{p_r} \der X^{(d)}_{t_{i_r}t_{i_r+1}}\big( X^{(d)}_{t_{i_r}}\big)^{q_r} \big\} \Big)\\
&-\vp_\infty \Big( \big\{ \big( X^{(\infty)}_{t_{i_1}}\big)^{p_1} \der X^{(\infty)}_{t_{i_1}t_{i_1+1}}\big( X^{(\infty)}_{t_{i_1}}\big)^{q_1} \big\} \cdots \big\{ \big( X^{(\infty)}_{t_{i_r}}\big)^{p_r} \der X^{(\infty)}_{t_{i_r}t_{i_r+1}}\big( X^{(\infty)}_{t_{i_r}}\big)^{q_r} \big\} \Big)= \sum_{g=1}^{R/4} d^{-2g} \sum_{\substack{\pi \in \mathcal{P}_2(R)\\\text{genus}(\pi)=g}} C_\pi \ .
\end{align*}
\normalsize
At this point, let us recall that since $H> \frac12$, the increments of a fractional Brownian motion of Hurst index $H$ are positively correlated, that is, if $x$ is such a fractional process (defined on some classical probability space $(\Omega,\mathcal{F},\mathbb{P})$), one has $\mathbb{E}\big[ \der x_{u_1 u_2}\der x_{v_1 v_2}\big]\geq 0$ for all $u_1\leq u_2$, $v_1\leq v_2$. In particular,  the time-covariances involved in $C_\pi$ are all positive, making $C_\pi$ positive too, so that for all $g\geq 1$,
\begin{equation}\label{bound-c-pi}
\Big| \sum_{\substack{\pi \in \mathcal{P}_2(R)\\\text{genus}(\pi)=g}} C_\pi \Big| \leq \sum_{\pi \in \mathcal{P}_2(R)} C_\pi \ .
\end{equation}
Going back to (\ref{first-expansion}), we thus have, setting $\text{d}_P:=\text{deg}(P)$ and $\text{d}_Q:=\text{deg}(Q)$,
\begin{align*}
&\bigg| \vp_d\Big( \Big( \sum_{i=0}^{2^n-1}P\big( X^{(d)}_{t_i^n}\big) \der X^{(d)}_{t_i^n t_{i+1}^n} Q\big( X^{(d)}_{t_i^n}\big)\Big)^r\Big)- \vp_\infty\Big( \Big( \sum_{i=0}^{2^n-1}P\big( X^{(\infty)}_{t_i^n}\big) \der X^{(\infty)}_{t_i^n t_{i+1}^n} Q\big( X^{(\infty)}_{t_i^n}\big)\Big)^r\Big)\bigg|\\
&\leq \Big( \sum_{g=1}^{\frac{r}{4}(\text{d}_P+\text{d}_Q)} d^{-2g} \Big) \Big( \sum_{i_1,\ldots,i_r=0}^{2^n-1} \sum_{p_1,\ldots,p_r \geq 0} \sum_{q_1,\ldots,q_r\geq 0} |a_{p_1}| \cdots |a_{p_r}| |b_{q_1}| \cdots |b_{q_r}| \sum_{\pi \in \mathcal{P}_2(R)} C_\pi\Big)\\
&\leq \Big( \sum_{g=1}^{\frac{r}{4}(\text{d}_P+\text{d}_Q)} d^{-2g} \Big)  \mathbb{E} \bigg[ \Big( \sum_{i=0}^{2^n-1} |P|(x_{t_i}) \der x_{t_i t_{i+1}} |Q|(x_{t_i})\Big)^r \bigg] \ ,
\end{align*}
where the polynomials $|P|,|Q|$ are defined as $|P|(X)=\sum_{p\geq 0} |a_p| X^p$ and $|Q|(X)=\sum_{q\geq 0} |b_q| X^q$. The uniform convergence statement (\ref{unifo-conv}) now comes from the fact that, by standard results on Young integration with respect to the fractional Brownian motion of index $H>\frac12$, one has
\begin{equation}\label{reduc-1-d-1}
\sup_{n\geq 0}\, \mathbb{E} \bigg[ \Big( \sum_{i=0}^{2^n-1} |P|(x_{t_i}) \der x_{t_i t_{i+1}} |Q|(x_{t_i})\Big)^r \bigg] \ < \ \infty\ .
\end{equation}
Once endowed with (\ref{unifo-conv}), we can successively assert that
\smallskip
\begin{align}
&\lim_{d\to \infty}\vp_d\Big( \Big( \int_0^1 P\big( X^{(d)}_u\big) \mathrm{d}X^{(d)}_u Q\big( X^{(d)}_u\big)\Big)^r\Big)\label{limits-switch-start}\\
&=\lim_{d\to \infty} \lim_{n\to \infty} \vp_d\Big( \Big( \sum_{i=0}^{2^n-1}P\big( X^{(d)}_{t_i^n}\big) \der X^{(d)}_{t_i^n t_{i+1}^n} Q\big( X^{(d)}_{t_i^n}\big)\Big)^r\Big) \quad \quad \text{(by (\ref{fixed-d-1}) with $d<\infty$)}\nonumber\\
&=\lim_{n\to \infty} \lim_{d\to \infty} \vp_d\Big( \Big( \sum_{i=0}^{2^n-1}P\big( X^{(d)}_{t_i^n}\big) \der X^{(d)}_{t_i^n t_{i+1}^n} Q\big( X^{(d)}_{t_i^n}\big)\Big)^r\Big) \quad \quad \text{(by (\ref{unifo-conv}))} \nonumber\\
&=\lim_{n\to \infty} \vp_\infty\Big( \Big( \sum_{i=0}^{2^n-1}P\big( X^{(\infty)}_{t_i^n}\big) \der X^{(\infty)}_{t_i^n t_{i+1}^n} Q\big( X^{(\infty)}_{t_i^n}\big)\Big)^r\Big)\quad \quad \text{(by Proposition \ref{prop:convergence-result})} \nonumber\\
&=\vp_\infty\Big( \Big( \int_0^1 P\big( X^{(\infty)}_u\big) \mathrm{d}X^{(\infty)}_u Q\big( X^{(\infty)}_u\big)\Big)^r\Big)\quad \quad \text{(by (\ref{fixed-d-1}) with $d=\infty$)}\label{limits-switch-end} \ ,
\end{align}
\normalsize
which corresponds to the desired conclusion.
\end{proof}

\subsection{Convergence of the integral in the Brownian case}

\

\smallskip

Recall that when $H=\frac12$ and for $1\leq d<\infty$, the integral with respect to $X^{(d)}$ can either be interpreted in the Itô sense (and denoted by $\int_0^1 \bu_u \sharp \mathrm{d}X^{(d)}_u$) or in the Stratonovich sense (and denoted by $\int_0^1 \bu_u \sharp (\circ  \mathrm{d}X^{(d)}_u)$). The main convergence result in this situation can then be stated as follows:

\begin{proposition}\label{prop:conv-integr-brown}
When $H=\frac12$, and for all polynomials $P,Q$, one has, in the sense of non-commutative probability,
\begin{equation}\label{conv-bro-ito}
\int_0^1 P\big( X^{(d)}_u\big) \mathrm{d}X^{(d)}_u Q\big( X^{(d)}_u\big)\stackrel{d\to \infty}{\longrightarrow} \int_0^1 P\big( X^{(\infty)}_u\big) \mathrm{d}X^{(\infty)}_u Q\big( X^{(\infty)}_u\big) \ ,
\end{equation}
where the latter integral is interpreted through the result of \cite[Proposition 2.6]{deya-schott-3} (or equivalently, through the result of \cite[Corollary 3.2.2]{biane-speicher}). Besides,
\begin{equation}\label{conv-bro-strato}
\int_0^1 P\big( X^{(d)}_u\big) (\circ \mathrm{d}X^{(d)}_u) Q\big( X^{(d)}_u\big)\stackrel{d\to \infty}{\longrightarrow}  \int_0^1 P\big( X^{(\infty)}_u\big) (\circ \mathrm{d}X^{(\infty)}_u) Q\big( X^{(\infty)}_u\big) \ ,
\end{equation}
where the latter \enquote{Stratonovich} integral is defined as
\begin{align}
&\int_s^t P(X^{(\infty)}_u) (\circ \mathrm{d}X^{(\infty)}_u) Q(X^{(\infty)}_u):=\int_s^t P(X^{(\infty)}_u) \mathrm{d}X^{(\infty)}_u Q(X^{(\infty)}_u)\nonumber\\
&\hspace{1cm}+\frac12 \int_s^t \mathrm{d}u\big( \id \times \vp_\infty \times \id\big)\big[ \partial P (X^{(\infty)}_u) \otimes Q(X^{(\infty)}_u)+P(X^{(\infty)}_u)\otimes \partial Q(X^{(\infty)}_u) \big] \ .\label{strato-free}
\end{align}
\end{proposition}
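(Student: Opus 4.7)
The argument splits along the two statements. For the It\^o convergence \eqref{conv-bro-ito}, I would follow verbatim the scheme used in the proof of Proposition~\ref{prop:conv-integr}, and simply check that its three ingredients survive at $H=\tfrac12$. First, the Riemann-sum approximation \eqref{fixed-d-1}: for finite $d$ the left-point It\^o Riemann sums converge in every $L^p$ to the It\^o integral by the Burkholder-Davis-Gundy inequality, while the rough-integral corrections involving $\mathbf{X}^{(d),\text{I}}$ are shown to vanish in $L^p$ exactly as in Remark~\ref{rk:convergence-l-p}; for $d=\infty$ this is \cite[Proposition 2.6]{deya-schott-3}. Second, the positivity input $C_\pi\geq 0$ used in the bound \eqref{bound-c-pi}: this persists at $H=\tfrac12$ since $c_{1/2}(s,t)=\min(s,t)\geq 0$ and $\mathbb{E}[\der x_{u_1u_2}\der x_{v_1v_2}]\geq 0$ for all ordered pairs of increments. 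Third, the uniform-in-$n$ moment bound \eqref{reduc-1-d-1}: this is a further application of BDG to the one-dimensional It\^o integral of $|P|(x_u)|Q|(x_u)$. With these three points secured, the chain of limit interchanges \eqref{limits-switch-start}--\eqref{limits-switch-end} carries over word-for-word.

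For the Stratonovich convergence \eqref{conv-bro-strato}, I would apply the matrix It\^o-Stratonovich formula of Proposition~\ref{prop:ito-strato} to write
\begin{align*}
\int_0^1 P(X^{(d)}_u)(\circ\mathrm{d}X^{(d)}_u)Q(X^{(d)}_u)&=\int_0^1 P(X^{(d)}_u)\mathrm{d}X^{(d)}_u Q(X^{(d)}_u)\\
&\quad+\frac12\int_0^1 \mathrm{d}u\,[\id\times\text{Tr}_d\times\id]\bigl(\partial P(X^{(d)}_u)\otimes Q(X^{(d)}_u)+P(X^{(d)}_u)\otimes\partial Q(X^{(d)}_u)\bigr).
\end{align*}
The It\^o piece on the right converges by \eqref{conv-bro-ito}, so it remains to identify the limit of the trace correction. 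Expanding $\partial X^m=\sum_k X^k\otimes X^{m-1-k}$ turns each summand of the correction into a scalar-weighted matrix of the form $X^{(d),k}_u\cdot \text{Tr}_d\bigl(X^{(d),p}_u\bigr)\cdot X^{(d),n}_u$, whose $\vp_d$-moments reduce, via the identity $\vp_d(A\cdot\text{Tr}_d(B)\cdot C)=\mathbb{E}[\text{Tr}_d(B)\text{Tr}_d(AC)]$ and iterated versions of it, to joint expectations of products of several normalized traces of monomials in $X^{(d)}$. A Wick/genus expansion in the spirit of Lemma~\ref{lem:gene} then gives the multi-trace factorization
\[
\mathbb{E}\bigl[\text{Tr}_d(B_1)\cdots\text{Tr}_d(B_s)\bigr]\stackrel{d\to\infty}{\longrightarrow}\prod_{i=1}^s \vp_\infty(B_i)\ ,
\]
the non-leading contributions being $O(d^{-2})$ thanks to the $d^{-2g}$ weighting. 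After integration in $u_1,\ldots,u_r$, this produces exactly the evaluation of $(\id\times\vp_\infty\times\id)$ on the limiting tensor, i.e.~the right-hand side of \eqref{strato-free}. The same Riemann-sum approximation and limit-interchange argument as in the It\^o part (with uniform moment bounds coming from the polynomial nature of the integrand) then completes the passage to the limit.

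The main technical obstacle is this concentration/factorization of multi-trace expectations, which is the natural extension of Proposition~\ref{prop:convergence-result} from a single cycle to products of several cycles. The mechanism---decay of positive-genus contributions through the $d^{-2g}$ factor---is identical, but the bookkeeping has to be carried out uniformly in the time variables $u_1,\ldots,u_r$ in order to legitimize the final $d\to\infty$, $n\to\infty$ interchange and conclude the Stratonovich identity.
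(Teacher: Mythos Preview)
Your treatment of the It\^o convergence \eqref{conv-bro-ito} matches the paper's proof essentially verbatim: the paper also recycles the scheme of Proposition~\ref{prop:conv-integr}, checking the three inputs (Riemann-sum approximation, nonnegativity of the Brownian covariance, uniform 1d moment bound via BDG) at $H=\tfrac12$.

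For the Stratonovich convergence \eqref{conv-bro-strato}, however, your route differs from the paper's. You split via the It\^o--Stratonovich formula \eqref{ito-strato-matrix} and then argue separately for the trace-correction term, which forces you to confront \emph{multi-trace} expectations $\mathbb{E}[\text{Tr}_d(B_1)\cdots\text{Tr}_d(B_s)]$ and their asymptotic factorization. The paper avoids this entirely: it approximates the Stratonovich integral directly by the Wong--Zaka\"i scheme $\int_0^1 P(X^{(d,n)}_u)\,\mathrm{d}X^{(d,n)}_u\,Q(X^{(d,n)}_u)$ built from the linear interpolation $X^{(d,n)}$ (this is \eqref{fixed-d-1-brown-strato}). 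The point is that this approximation is a genuine polynomial in the variables $X^{(d)}_{t_i^n},\,\der X^{(d)}_{t_i^n t_{i+1}^n}$, so its $r$-th $\vp_d$-moment is again a \emph{single} trace of a Gaussian product and Lemma~\ref{lem:gene} applies as is, with the same positivity argument and the 1d bound $\sup_n \mathbb{E}\big[|\int_0^1 |P|(x^{(n)}_u)\,\mathrm{d}x^{(n)}_u\,|Q|(x^{(n)}_u)|^r\big]<\infty$. No multi-trace combinatorics, no mixed It\^o/correction moments.

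Your approach is not wrong, but it is heavier: the multi-trace factorization you flag as the main obstacle is a genuine extension of Lemma~\ref{lem:gene} (from one cycle to several), and even once that is in hand you still need the Riemann-sum machinery to handle the cross terms in $(\text{It\^o}^{(d)}+\text{Corr}^{(d)})^r$, since the It\^o piece is not itself polynomial in the $X^{(d)}$-entries. The paper's Wong--Zaka\"i choice packages the Stratonovich integral into a single polynomial object from the start and thereby keeps the whole argument within the single-trace framework already developed for Proposition~\ref{prop:conv-integr}.
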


\

Let us again point out the similarity between the two Itô--Stratonovich formulas (\ref{ito-strato-matrix}) and (\ref{strato-free}), and the underlying \enquote{transformation} of the matrix trace $\text{Tr}_d$ in (\ref{ito-strato-matrix}) into the non-commutative trace $\vp_\infty$ in (\ref{strato-free}).

\smallskip

In order to show (\ref{conv-bro-ito}) and (\ref{conv-bro-strato}), we will use the same general idea as in the previous section, namely a polynomial approximation of the integrals. Our starting result is here the following one:

\begin{lemma}
Assume that $H= \frac12$. Then for all fixed $1\leq d\leq \infty$, $1\leq r <\infty$ and all polynomials $P,Q$, it holds that
\begin{equation}\label{fixed-d-1-brown}
\vp_d\Big( \Big( \int_0^1 P\big( X^{(d)}_u\big) \mathrm{d}X^{(d)}_u Q\big( X^{(d)}_u\big)\Big)^r\Big)=\lim_{n\to \infty} \vp_d\Big( \Big( \sum_{i=0}^{2^n-1}P\big( X^{(d)}_{t_i^n}\big) \der X^{(d)}_{t_i^n t_{i+1}^n} Q\big( X^{(d)}_{t_i^n}\big)\Big)^r\Big) 
\end{equation}
and 
\begin{equation}\label{fixed-d-1-brown-strato}
\vp_d\Big( \Big( \int_0^1 P\big( X^{(d)}_u\big) (\circ \mathrm{d}X^{(d)}_u) Q\big( X^{(d)}_u\big)\Big)^r\Big)=\lim_{n\to \infty} \vp_d\Big( \Big(  \int_0^1 P\big( X^{(d,n)}_u\big) \mathrm{d} X^{(d,n)}_u Q\big( X^{(d,n)}_u\big)\Big)^r\Big) \ ,
\end{equation}
where, for each $1\leq d\leq \infty$ and $n\geq 1$, $X^{(d,n)}$ stands for the linear interpolation of $X^{(d)}$ along the subdivision $(t_i^n)$, that is
\begin{equation}\label{linear-interpolation}
X^{(d,n)}_t:=X^{(d)}_{t_i^n}+2^n(u-t_i^n) \der X^{(d)}_{t_i^n t_{i+1}^n} \quad \text{for} \ \ t\in [t_i^n,t_{i+1}^n] \ .
\end{equation}
\end{lemma}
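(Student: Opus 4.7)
The two identities (\ref{fixed-d-1-brown}) and (\ref{fixed-d-1-brown-strato}) will be treated separately, and in each case the argument splits between the matrix regime $1\leq d<\infty$ and the free regime $d=\infty$. For $1\leq d<\infty$, the plan is to imitate the proof of the Young analog (\ref{fixed-d-1}): reduce to an $L^2(\Omega)$-convergence of the approximation and then invoke the deterministic-to-moment bound (\ref{link-classical-norm}). The difference with the Young regime is that since $H=\frac12$, the rough correction terms in (\ref{corrected-riemmann-sums}) no longer carry a vanishing $|t-s|^{2\ka}$-factor (one cannot pick $\ka>\frac12$ since $H=\frac12$), so the sewing estimate is of no help; we bypass this by working from the pathwise identification (\ref{identif-pathwise}) instead of from Proposition \ref{prop-int-gen}.

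Concretely, for the It\^o identity (\ref{fixed-d-1-brown}), (\ref{identif-pathwise}) exhibits the left-hand side as the componentwise classical It\^o integral against the underlying Brownian motions $\{x(i,j),\tilde x(i,j)\}$, and by It\^o's $L^2$-isometry, combined with the polynomial growth of the integrands, the Riemann sum converges to this integral in $L^2(\Omega)$ with uniformly-bounded moments of all orders; (\ref{link-classical-norm}) then upgrades this to the claimed $\vp_d$-limit of the $r$-th power. For the Stratonovich identity (\ref{fixed-d-1-brown-strato}), the same identification turns the left-hand side into the componentwise classical Stratonovich integral; since $X^{(d,n)}$ is piecewise smooth, the right-hand side is an ordinary Riemann--Stieltjes integral and, by the classical Wong--Zakai theorem applied componentwise to the $d^2$ scalar factors, it converges in $L^2(\Omega)$ to the Stratonovich integral. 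The conversion bound (\ref{link-classical-norm}) again concludes.

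For $d=\infty$, identity (\ref{fixed-d-1-brown}) is precisely the construction of $\int_0^1 P(X^{(\infty)}_u)\mathrm{d}X^{(\infty)}_u Q(X^{(\infty)}_u)$ in \cite[Proposition 2.6]{deya-schott-3}, namely as the $\vp_\infty$-limit of exactly these Riemann sums. The main obstacle is the Stratonovich identity (\ref{fixed-d-1-brown-strato}) at $d=\infty$, for which no classical Wong--Zakai theorem is available. The plan is to use the linearity of $X^{(\infty,n)}$ on each $[t_i^n,t_{i+1}^n]$ and the change of variable $s=2^n(u-t_i^n)$ to rewrite the right-hand integrand on such an interval as $\int_0^1 P(X^{(\infty)}_{t_i^n}+s\,\der X^{(\infty)}_{t_i^n t_{i+1}^n})\,\der X^{(\infty)}_{t_i^n t_{i+1}^n}\,Q(X^{(\infty)}_{t_i^n}+s\,\der X^{(\infty)}_{t_i^n t_{i+1}^n})\,\mathrm{d}s$, to perform a non-commutative Taylor expansion of $P,Q$ in powers of the increment $\der X^{(\infty)}_{t_i^n t_{i+1}^n}$ (via the derivative $\partial$ introduced after Definition \ref{def:control-biproc}), and to integrate in $s$. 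The leading $O(\der X)$-term reconstructs the It\^o Riemann sum, which converges by the just-settled $d=\infty$ It\^o identity; the $O(\der X^3)$-remainder vanishes under $\vp_\infty$ by free Brownian scaling; and the key $O(\der X^2)$-term collapses under $\vp_\infty$ thanks to the free independence between $\der X^{(\infty)}_{t_i^n t_{i+1}^n}$ and the algebra generated by $X^{(\infty)}_{t_i^n}$, via the semicircular identity $\vp_\infty(f(X_s)\,\der X_{st}\,g(X_s)\,\der X_{st})=(t-s)\vp_\infty(f(X_s))\vp_\infty(g(X_s))$, reproducing exactly the $\vp_\infty$-correction of (\ref{strato-free}). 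The verification is then a moment-by-moment combinatorial matching, in the spirit of the direct proof of \cite[Proposition 5.6]{deya-schott}.
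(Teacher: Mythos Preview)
Your treatment of the finite-$d$ case and of the It\^o identity at $d=\infty$ matches the paper exactly: reduce via (\ref{link-classical-norm}) to an $L^2(\Omega)$-statement and invoke classical Brownian approximation (It\^o isometry for (\ref{fixed-d-1-brown}), Wong--Zaka\"i for (\ref{fixed-d-1-brown-strato})), and for $d=\infty$ defer to \cite[Proposition~2.6]{deya-schott-3}. The paper in fact cites that same reference for \emph{both} $d=\infty$ identities, including the Stratonovich one, and gives no further detail; so your last paragraph is additional work beyond what the paper does.

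That extra sketch is on the right track but has a gap worth naming. The identity you quote, $\vp_\infty(f(X_s)\,\der X_{st}\,g(X_s)\,\der X_{st})=(t-s)\vp_\infty(f(X_s))\vp_\infty(g(X_s))$, only matches the \emph{trace} of the quadratic term with the trace of the correction element in (\ref{strato-free}); it does not show that the sum of quadratic terms converges \emph{as an element} to $\frac12\int_0^1 (\id\times\vp_\infty\times\id)[\cdots]\,\mathrm{d}u$. For $r=1$ that is enough, but for $r\geq 2$ you need convergence of the full $r$-th moment, which requires either (a) upgrading each piece of your decomposition to $L^2(\vp_\infty)$-convergence (this is a free quadratic-variation statement: $\sum_i A_{t_i}\,\der X_{t_i t_{i+1}}\,B_{t_i}\,\der X_{t_i t_{i+1}}\,C_{t_i}\to \int_0^1 A_u\,\vp_\infty(B_u)\,C_u\,\mathrm{d}u$ in $L^2(\vp_\infty)$, which does hold by free It\^o calculus), or (b) a genuine expansion of the $r$-fold product with a careful accounting of all mixed terms. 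Your closing sentence gestures at (b), but the trace identity you cite is not the tool that carries it; either route amounts to re-deriving what \cite[Proposition~2.6]{deya-schott-3} already packages, which is why the paper simply cites it.
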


\begin{proof}
For $1\leq d< \infty$, we can use (\ref{link-classical-norm}) to reduce the problem to the consideration of the moments
$$\mathbb{E}\Big[ \Big\| \int_0^1 P\big( X^{(d)}_u\big) \mathrm{d}X^{(d)}_u Q\big( X^{(d)}_u\big)-\sum_{i=0}^{2^n-1}P\big( X^{(d)}_{t_i^n}\big) \der X^{(d)}_{t_i^n t_{i+1}^n} Q\big( X^{(d)}_{t_i^n}\big)\Big\|^r \Big]$$
and
$$\mathbb{E}\Big[ \Big\| \int_0^1 P\big( X^{(d)}_u\big) (\circ \mathrm{d}X^{(d)}_u) Q\big( X^{(d)}_u\big)-\int_0^1 P\big( X^{(d,n)}_u\big) \mathrm{d} X^{(d,n)}_u Q\big( X^{(d,n)}_u\big)\Big\|^r \Big]$$
for any $r\geq 1$. The fact that these quantities converge to $0$ as $n\to\infty$ (for any $r\geq 1$) is then a standard approximation result from Brownian analysis, and (\ref{fixed-d-1-brown})-(\ref{fixed-d-1-brown-strato}) immediately follow.

\smallskip

For $d=\infty$, the assertion can be readily derived from \cite[Proposition 2.6]{deya-schott-3}.
\end{proof}

\

\begin{proof}[Proof of Proposition \ref{prop:conv-integr-brown}]
Endowed with (\ref{fixed-d-1-brown})-(\ref{fixed-d-1-brown-strato}), and keeping the polynomial convergence of Proposition \ref{prop:convergence-result} in mind, it suffices, as before, to show that
\small
\begin{equation}\label{unifo-conv-ito}
\sup_{n\geq 0}\, \bigg| \vp_d\Big( \Big( \sum_{i=0}^{2^n-1}P\big( X^{(d)}_{t_i^n}\big) \der X^{(d)}_{t_i^n t_{i+1}^n} Q\big( X^{(d)}_{t_i^n}\big)\Big)^r\Big)- \vp_\infty\Big( \Big( \sum_{i=0}^{2^n-1}P\big( X_{t_i^n}\big) \der X^{(\infty)}_{t_i^n t_{i+1}^n} Q\big( X_{t_i^n}\big)\Big)^r\Big)\bigg| \stackrel{d\to \infty}{\longrightarrow} 0 \ .
\end{equation}
\normalsize
and that
\small
\begin{align}
\sup_{n\geq 0}\, \bigg|& \vp_d\Big( \Big( \int_0^1 P\big( X^{(d,n)}_u\big) \mathrm{d} X^{(d,n)}_u Q\big( X^{(d,n)}_u\big)\Big)^r\Big)\nonumber\\
&\hspace{1cm}- \vp_\infty\Big( \Big( \int_0^1 P\big( X^{(\infty,n)}_u\big) (\circ \mathrm{d}X^{(\infty,n)}_u) Q\big( X^{(\infty,n)}_u\big)\Big)^r\Big)\bigg| \stackrel{d\to \infty}{\longrightarrow} 0 \ .\label{unifo-conv-strato}
\end{align}
\normalsize
For (\ref{unifo-conv-ito}), we can easily follow the lines of the proof of Proposition \ref{prop:conv-integr}, using the fact that the increments of a (standard) Brownian motion $x$ are also positively correlated, together with the standard uniform control
$$\sup_{n\geq 0}\, \mathbb{E} \bigg[ \Big( \sum_{i=0}^{2^n-1} |P|(x_{t_i}) \der x_{t_i t_{i+1}} |Q|(x_{t_i})\Big)^r \bigg] \ < \ \infty\ ,$$
for any $r\geq 1$.

\smallskip

In fact, this strategy can be applied to prove (\ref{unifo-conv-strato}) as well, by noting on the one hand that the expansion of the moment
\small
\begin{align*}
&\vp_d\Big( \Big( \int_0^1 P\big( X^{(d,n)}_u\big) \mathrm{d} X^{(d,n)}_u Q\big( X^{(d,n)}_u\big)\Big)^r\Big)\\
&=\vp_d\Big( \Big( \sum_{i=0}^{2^n-1}\int_{t_i^n}^{t_{i+1}^n}du\,  P\big( X^{(d)}_{t_i^n}+2^n(u-t_i^n) \der X^{(d)}_{t_i^n t_{i+1}^n}\big) \big( 2^n \der X^{(d)}_{t_i^nt_{i+1}^n}\big) Q\big( X^{(d)}_{t_i^n}+2^n(u-t_i^n) \der X^{(d)}_{t_i^n t_{i+1}^n}\big)\Big)^r\Big)
\end{align*}
\normalsize
still gives rise to the consideration of positively-correlated variables, and on the other hand that
\begin{equation}\label{reduc-1-d-2}
\sup_{n\geq 0}\, \mathbb{E} \bigg[  \Big| \int_0^1 |P|\big( x^{(n)}_u\big) \mathrm{d} x^{(n)}_u |Q|\big( x^{(n)}_u\big)\Big|^r \bigg] \ < \ \infty\ ,
\end{equation}
where $x^{(n)}$ stands for the linear interpolation of a (standard) Brownian motion $x$ along $(t_i^n)$.

\smallskip

Combining (\ref{fixed-d-1-brown})-(\ref{fixed-d-1-brown-strato}) with (\ref{unifo-conv-ito})-(\ref{unifo-conv-strato}), we can then switch the order of the limits just as in (\ref{limits-switch-start})-(\ref{limits-switch-end}), by considering the \enquote{Riemann-sum} approximation
$$\sum_{i=0}^{2^n-1}P\big( X^{(d)}_{t_i^n}\big) \der X^{(d)}_{t_i^n t_{i+1}^n} Q\big( X^{(d)}_{t_i^n}\big)$$
in the Itô case (\ref{conv-bro-ito}), and the \enquote{Wong-Zakaï} approximation
$$\int_0^1 P\big( X^{(d,n)}_u\big) \mathrm{d} X^{(d,n)}_u Q\big( X^{(d,n)}_u\big)$$
in the Stratonovich case (\ref{conv-bro-strato}).
\end{proof}

\subsection{About the extension of the convergence result to $H\in (\frac13,\frac12)$} 

\

\smallskip

When $H\in (\frac13,\frac12)$, and for any finite $d\geq 1$, we have seen that we can still define the integral $\int_0^1 P(X^{(d)}_u) dX^{(d)}_u Q(X^{(d)}_u)$ through the considerations of Section \ref{sec:integr-hfbm}, that is as the rough integral
$$\int_0^1 P(X^{(d)}_u) dX^{(d)}_u Q(X^{(d)}_u):=\int_0^1 (P(X^{(d)}_u) \otimes Q(X^{(d)}_u)) \sharp d\mathbb{X}^{(d)}_u \ ,$$
where $\mathbb{X}^{(d)}:=(X^d,\mathbf{X}^{(d)})$ and $\mathbf{X}^{(d)}$ is the product Lévy area derived from the canonical Lévy areas in (\ref{1d-levy-areas}). Besides, thanks to the continuity of the rough constructions, it can be shown that the so-defined integral satifies, for every $r\geq 1$,
\begin{equation}\label{fixed-d-rough}
\vp_d\Big( \Big( \int_0^1 P\big( X^{(d)}_u\big) \mathrm{d}X^{(d)}_u Q\big( X^{(d)}_u\big)\Big)^r\Big)=\lim_{n\to \infty} \vp_d\Big( \Big(  \int_0^1 P\big( X^{(d,n)}_u\big) \mathrm{d} X^{(d,n)}_u Q\big( X^{(d,n)}_u\big)\Big)^r\Big) \ ,
\end{equation}
where $X^{(d,n)}$ is the linear interpolation introduced in (\ref{linear-interpolation}), which thus extends the approximation property (\ref{fixed-d-1-brown-strato}).

\smallskip

These results happen to remain true for $d=\infty$, that is for a NC-fBm $X^{(\infty)}$ of Hurst index $H\in (\frac13,\frac12)$. Indeed, as stated in \cite[Proposition 2.9]{deya-schott-3}, we can also define the integral
$$\int_0^1 P(X^{(\infty)}_u) dX^{(\infty)}_u Q(X^{(\infty)}_u)$$
through some rough construction (using the \enquote{canonical} product Lévy area above $X^{(\infty)}$ exhibited in \cite[Proposition 2.8]{deya-schott-3}), and it holds that
\begin{equation}\label{approx-ncfbm-rough}
\vp_\infty\Big( \Big( \int_0^1 P\big( X^{(\infty)}_u\big) \mathrm{d}X^{(\infty)}_u Q\big( X^{(\infty)}_u\big)\Big)^r\Big)=\lim_{n\to \infty} \vp_\infty\Big( \Big(  \int_0^1 P\big( X^{(\infty,n)}_u\big) \mathrm{d} X^{(\infty,n)}_u Q\big( X^{(\infty,n)}_u\big)\Big)^r\Big) \ ,
\end{equation}
for every $r\geq 1$.

\

Combining (\ref{fixed-d-rough})-(\ref{approx-ncfbm-rough}) with the polynomial convergence of Proposition \ref{prop:convergence-result} (valid for every $H\in (0,1)$), the problem reduces, as before, to justifying the fact that we can switch the limits in $d$ and in $n$, along the same procedure as in (\ref{limits-switch-start})-(\ref{limits-switch-end}). Unfortunately, the arguments that we have used to this end in the proofs of Proposition \ref{prop:conv-integr} and Proposition \ref{prop:conv-integr-brown} (leading to a uniform-in-$n$ convergence as $d\to \infty$) are no longer valid when $H<\frac12$, since the (disjoint) increments of the fractional Brownian motion are then known to be negatively correlated. In other words, we can no longer ensure that the quantity $C_\pi$ in (\ref{lhs}) (or rather its counterpart when considering the approximation $\int_0^1 P\big( X^{(d,n)}_u\big) \mathrm{d} X^{(d,n)}_u Q\big( X^{(d,n)}_u\big)$) is always positive, which annihilates estimate (\ref{bound-c-pi}) and the possibility to go back to the 1d situation (i.e., to the consideration of the uniform estimate (\ref{reduc-1-d-2}), with $x$ a fBm of Hurst index $H\in (\frac13,\frac12)$).

\

In fact, using Lemmas \ref{lem:voicu}-\ref{lem:gene} and setting $\text{d}_P:=\text{deg}(P)$, $\text{d}_Q:=\text{deg}(Q)$, we can write the difference under consideration as
\begin{align*}
&\vp_d\Big( \Big( \int_0^1 P\big( X^{(d,n)}_u\big) \mathrm{d} X^{(d,n)}_u Q\big( X^{(d,n)}_u\big)\Big)^r\Big)- \vp_\infty\Big( \Big( \int_0^1 P\big( X^{(\infty,n)}_u\big) \mathrm{d} X^{(\infty,n)}_u Q\big( X^{(\infty,n)}_u\big)\Big)^r\Big)\\
&\hspace{2cm}=\sum_{g=1}^{\frac{r}{4}(\text{d}_P+\text{d}_Q)} d^{-2g} \vp^{(g)}_\infty\Big( \Big(\int_0^1 P\big( X^{(\infty,n)}_u\big) \mathrm{d} X^{(\infty,n)}_u Q\big( X^{(\infty,n)}_u\big)\Big)^r\Big) \ , 
\end{align*}
where for every fixed genus $g\geq 1$, the quantity $\vp^{(g)}_\infty\big( ...\big)$ is formally defined through the linear extension of the formula
$$\vp^{(g)}_\infty \big( X^{(\infty)}_{t_1}\cdots X^{(\infty)}_{t_r}\big)=\sum_{\substack{\pi \in \mathcal{P}_2(r)\\\text{genus}(\pi)=g}} \prod_{(p,q)\in \pi} c_H(t_p,t_q) \ .$$
Accordingly, for the desired uniform-in-$n$ convergence to be true (allowing to switch the limits in $d,n$), it would be sufficient to show that for all fixed genus $g\geq 1$ and order $r\geq 1$, 
$$\sup_{n\geq 0}\, \Big|  \vp^{(g)}_\infty\Big( \Big(\int_0^1 P\big( X^{(\infty,n)}_u\big) \mathrm{d} X^{(\infty,n)}_u Q\big( X^{(\infty,n)}_u\big)\Big)^r\Big) \Big| \ < \infty \  .$$

\smallskip

When $g=0$, this uniform estimate is a consequence of (\ref{approx-ncfbm-rough}), and thus follows from the (sophisticated) considerations of \cite{deya-schott-3}. We could then be tempted to try to extend the latter considerations to every $g\geq 1$ (starting from some kind of \enquote{NC-fBm with genus $g$} in a NC-probability space). Unfortunately, when doing so, one soon realizes that, contrary to $\vp^{(0)}_\infty$, the above functional $\vp_\infty^{(g)}$ (for $g\geq 1$) cannot be extended into a genuine positive trace, in the sense of Definition \ref{defi:nc-probability-space}, item $(2)$. For instance, noting that the sole pairing $\pi\in \mathcal{P}_2(4)$ with genus $1$ is the one given by $\{\{1,3\},\{2,4\}\}$, one has
\begin{align*}
&\vp^{(1)}_\infty\Big( \Big(X^{(\infty)}_1 X^{(\infty)}_2-X^{(\infty)}_2 X^{(\infty)}_1\Big)\Big(X^{(\infty)}_1 X^{(\infty)}_2-X^{(\infty)}_2 X^{(\infty)}_1\Big)^\ast\Big)\\
&=\vp^{(1)}_\infty\big( X^{(\infty)}_1 X^{(\infty)}_2 X^{(\infty)}_2 X^{(\infty)}_1\big)-\vp^{(1)}_\infty\big( X^{(\infty)}_1 X^{(\infty)}_2 X^{(\infty)}_1 X^{(\infty)}_2\big)\\
&\hspace{0.5cm}-\vp^{(1)}_\infty\big( X^{(\infty)}_2 X^{(\infty)}_1 X^{(\infty)}_2 X^{(\infty)}_1\big)+\vp^{(1)}_\infty\big( X^{(\infty)}_2 X^{(\infty)}_1 X^{(\infty)}_1 X^{(\infty)}_2 \big)\\
&=2 \{c_H(1,2)^2-c_H(1,1)c_H(2,2)\}=2\{2^{4H-2}-2^{2H}\}=2^{2H+1}\{2^{2H-2}-1\} \ < \ 0 \ .
\end{align*}
This observation immediately rules out the possibility to consider the non-commutative probability setting when $g\geq 1$ and so to adapt the developments of \cite{deya-schott-3}. 

\

As a result, when $H\in (\frac13,\frac12)$, the convergence property (understood in the sense of non-commutative probability)
$$\int_0^1 P\big( X^{(d)}_u\big) \mathrm{d}X^{(d)}_u Q\big( X^{(d)}_u\big)\stackrel{d\to \infty}{\longrightarrow} \int_0^1 P\big( X^{(\infty)}_u\big) \mathrm{d}X^{(\infty)}_u Q\big( X^{(\infty)}_u\big)$$
still remains a conjecture for the moment.

\bigskip

\bigskip

\end{document}